\numberwithin{equation}{section}
\theoremstyle{plain}
\newtheorem{theorem}{Theorem}[section]
\newtheorem{lemma}[theorem]{Lemma}
\newtheorem{corollary}[theorem]{Corollary}
\theoremstyle{definition}
\newtheorem{remark}[theorem]{Remark}
\newtheorem{example}[theorem]{Example}
\newtheorem{definition}[theorem]{Definition}
\begin{document}

\title[On power bounded operators]{On power bounded operators that are quasiaffine transforms of singular unitaries}
\author{Maria F. Gamal'}
\address{
 St. Petersburg Branch\\ V. A. Steklov Institute 
of Mathematics\\
 Russian Academy of Sciences\\ Fontanka 27, St. Petersburg\\ 
191023, Russia  
}
\email{gamal@pdmi.ras.ru}

%\thanks{The second author has been supported by a grant of the

\subjclass[2010]{ Primary 47A05; Secondary 47B99,  47B15.}

\keywords{Power bounded operator, singular unitary operator, 
similarity, quasisimilarity, quasiaffine transform}

%\date\today

\begin{abstract}In \cite{9} a question is raised:
  if a power bounded operator 
is quasisimilar to a singular unitary operator, is it similar
to this unitary operator? For polynomially bounded operators,
a positive answer to this question is known \cite{1}, \cite{13}. In this paper
a positive answer is given in some particular cases, 
but in general an answer remains unknown.
\end{abstract} 

\maketitle

\section{Introduction}

%\noindent Research partially supported by RFBR grant no. 08-01-00723a.

Let $\mathcal H$ be a (complex, separable) Hilbert space, and let $T$ be 
a (linear, bounded) operator acting on 
$\mathcal H$. An operator $T$ is called {\it power bounded} if 
$\sup_{n\geq 0}\|T^n\|<\infty$.

Let $T$ and $R$ be operators on spaces $\mathcal H$ and $\mathcal K$, 
respectively, and let
$X:\mathcal H\to\mathcal K$ be a linear bounded transformation such that 
$X$ {\it intertwines} $T$ and $R$,
that is, $XT=RX$. If $X$ is unitary, then $T$ and $R$
are called {\it unitarily equivalent}, in notation:
$T\cong R$. If $X$ is invertible, that is, the inverse $X^{-1}$ is bounded,  then $T$ and $R$ 
are called {\it similar}, in notation: $T\approx R$.
If $X$ is a {\it quasiaffinity}, that is, $\ker X=\{0\}$ and 
$\operatorname{clos}X\mathcal H=\mathcal K$, then
$T$ is called a {\it quasiaffine transform} of $R$, 
in notation: $T\prec R$. If $T\prec R$ and 
$R\prec T$, then $T$ and $R$ are called {\it quasisimilar}, 
in notation: $T\sim R$. Recall that if $T$ and $R$ 
are unitary operators and  $T\prec R$, then  $T\cong R$
({\cite[II.3.4]{19}}). 

 It is well known that if $T$ is a contraction, that is, 
 $\|T\|\leq 1$, and $U$ is a singular unitary operator, then the relation  
$T\prec U$  implies that $T\cong U$.

It is known that if $T$ is a power bounded operator 
and $U$ is a unitary operator whose spectral measure 
is pure atomic, then the relation $T\prec U$ 
implies that $T\approx U$ (\cite{1}, \cite{17}, see also Example \ref{exa2.6}).

Also, if $T$ is polynomially bounded, that is,  
there exists a constant $M$ such that 
$\|p(T)\|\leq M \sup\{|p(z)|, \ |z|\leq 1\}$
for every polynomial $p$, 
and $U$ is singular unitary, then the relation 
$T\prec U$ implies that $T\approx U$ \cite{1}, \cite{13}. 
We sketch the proof for the case, where the 
 (closed) spectrum of $U$ has Lebesgue measure zero. 
 Denote by $\mathbb D$ and by $\mathbb T$ the unit disk and the unit circle,
 respectively.
 For a polynomially bounded operator $T$ 
the functional calculus on the disk algebra $\mathcal A(\mathbb D)$ is well defined. 
Let $E\subset\mathbb T$ be a closed set of zero  Lebesgue measure. 
We denote by $C(E)$ the space of continuous functions on $E$.
  We set $\mathcal I(E)=\{f\in\mathcal A(\mathbb D):\  f=0$ on $E\}$ and 
$\mathcal A(E) =\mathcal A(\mathbb D)/\mathcal I(E)$. 
It is known that  the natural imbedding $\mathcal A(E) \to C(E)$
is an isometrical isomorphism {\cite[ch.6]{3}}. 
Applying this fact to a polynomially bounded operator $T$ such that
$T\prec U$, where $U$ is unitary with spectrum $E$, 
we obtain that $\|T^{-n}\|\leq M$
for $n\in\mathbb N$ (where $M$ is a constant from the condition 
on polynomially boundedness of $T$).
 Thus, we can apply \cite{18} and conclude that $T\approx U$.

If we suppose only that $T$ is power bounded, then it seems 
 natural to consider
the functional calculus on the subalgebra 
$$A^+(\mathbb T)=\{f(z)=\sum_{n\geq 0}\hat f(n) z^n, \ \ 
\|f\|=\sum_{n\geq 0}|\hat f(n)|<\infty\}$$
of the  Wiener algebra 
$$A(\mathbb T)=\{f(z)=\sum_{n\in \mathbb Z}\hat f(n) z^n, \ \ 
\|f\|=\sum_{n\in \mathbb Z}|\hat f(n)|<\infty\}.$$
 Let $E\subset\mathbb T$ be a closed set of zero  Lebesgue measure. 
As before, we set  
$$I^+(E)=\{f\in A^+(\mathbb T):\  f=0\ \text{on}\ E\}, 
\ \ \ A^+(E) =A^+(\mathbb T)/I^+(E),$$
and we consider the natural imbedding $ A^+(E) \to C(E)$. 
But there exist many  closed sets $E$
of zero  Lebesgue measure for 
which this imbedding is not an  isomorphism \cite{2}, \cite{6}, \cite{8}, \cite{11}.
Let us consider $AA^+$-sets \cite{6}, \cite{7}. Namely, 
for a closed set $E\subset\mathbb T$ of zero Lebesgue measure, we set 
$$I(E)=\{f\in A(\mathbb T):\  f=0 \ \text{on}\  E\}, \ \ \ A(E) =A(\mathbb T)/I(E),$$
and consider the natural imbedding 
\begin{equation}\label{1.1} A^+(E) \to A(E). \end{equation}
If the imbedding \eqref{1.1} is onto, then the set $E$ is called an $AA^+$-set.
For an $AA^+$-set $E$ the imbedding \eqref{1.1}  is invertible; we will denote by 
 $K(E)$ the norm of its inverse.

\begin{lemma}\label{lem1.1} Suppose that $T$ is a power bounded operator, 
$U$ is a unitary operator, 
$T\prec U$ and
the spectrum $E$ of $U$ is an $AA^+$-set. Then $T\approx U$ and 
\begin{equation}\label{1.2}\sup_{n\leq 0}\|T^n\|\leq K(E)\sup_{n\geq 0}\|T^n\|, \end{equation}
where $K(E)$ is the norm of the inverse to the imbedding \eqref{1.1}.\end{lemma}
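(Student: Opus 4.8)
The plan is to upgrade the $A^{+}(\mathbb T)$-functional calculus of the power bounded operator $T$ to a functional calculus on $A^{+}(E)$, and then to use the defining property of an $AA^{+}$-set to invert the coordinate function inside that calculus with norm controlled by $K(E)$. Concretely, put $L=\sup_{n\ge 0}\|T^{n}\|$; since $\sum_{n\ge0}|\hat f(n)|<\infty$ and $T^{j}T^{k}=T^{j+k}$, the map $f\mapsto f(T):=\sum_{n\ge0}\hat f(n)T^{n}$ is a unital algebra homomorphism from $A^{+}(\mathbb T)$ into the bounded operators on $\mathcal H$ with $\|f(T)\|\le L\|f\|$. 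Let $X$ be a quasiaffinity with $XT=UX$; then $Xf(T)=f(U)X$ for all $f\in A^{+}(\mathbb T)$, and if $f\in I^{+}(E)$ (that is, $f|_{E}=0$) then $f(U)=0$ by the functional calculus of the unitary $U$, whose spectrum is $E$; hence $Xf(T)=0$ and, since $\ker X=\{0\}$, $f(T)=0$. Thus $f(T)$ depends only on the coset of $f$ in $A^{+}(E)=A^{+}(\mathbb T)/I^{+}(E)$, and passing to the quotient norm yields a unital homomorphism $\Phi\colon A^{+}(E)\to B(\mathcal H)$, $\|\Phi\|\le L$, sending the coordinate function $z$ to $T$.

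Now I would use that $E$ is an $AA^{+}$-set. For $n\in\mathbb N$ the function $z^{-n}$ lies in $A(\mathbb T)$ with Wiener norm $1$, so its image in $A(E)$ has norm at most $1$; applying the inverse of the imbedding \eqref{1.1}, one gets $\psi_{n}\in A^{+}(E)$ whose image in $A(E)$ equals $z^{-n}|_{E}$ and with $\|\psi_{n}\|_{A^{+}(E)}\le K(E)$. Since $z^{n}\psi_{n}$ and the unit $1$ have the same image in $A(E)$ and the imbedding \eqref{1.1} is injective, $z^{n}\psi_{n}=\psi_{n}z^{n}=1$ in $A^{+}(E)$. Applying $\Phi$ gives $T^{n}\Phi(\psi_{n})=\Phi(\psi_{n})T^{n}=I$, so $T$ is invertible, $T^{-n}=\Phi(\psi_{n})$, and $\|T^{-n}\|\le L\|\psi_{n}\|_{A^{+}(E)}\le K(E)L$ for every $n\ge 0$ --- which is exactly \eqref{1.2}.

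Finally, $T$ is invertible with $\sup_{n\in\mathbb Z}\|T^{n}\|\le K(E)L<\infty$, so by the classical theorem of Sz.-Nagy $T$ is similar to a unitary operator $W$. Composing an invertible intertwiner realizing $W\approx T$ with $X$ shows $W\prec U$; since $W$ and $U$ are both unitary, $W\cong U$ by the fact recalled after the definition of quasiaffine transform, whence $T\approx W\cong U$, i.e.\ $T\approx U$.

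I expect the only delicate point to be the norm bookkeeping: one must verify that $\Phi$ has norm $\le L$ with respect to the \emph{quotient} norm on $A^{+}(E)$, and that the identity $z^{n}\psi_{n}=1$ really holds in the algebra $A^{+}(E)$ (not merely pointwise on $E$), so that $\Phi(\psi_{n})$ is a genuine two-sided inverse of $T^{n}$ carrying the asserted norm bound. The homomorphism property of the $A^{+}$-calculus, the vanishing of $f(U)$ for $f|_{E}=0$, and the reduction of the similarity statement to Sz.-Nagy's theorem plus the rigidity of quasiaffine transforms of unitaries are all routine.
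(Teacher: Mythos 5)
Your proof is correct and follows essentially the same route as the paper: the $A^{+}(\mathbb T)$-calculus of $T$ factors through $A^{+}(E)$ because $T\prec U$ forces $f(T)=0$ for $f\in I^{+}(E)$, the $AA^{+}$-property supplies an inverse of $z^{k}$ in $A^{+}(E)$ of norm at most $K(E)$, giving $\|T^{-k}\|\le K(E)\sup_{n\ge 0}\|T^{n}\|$, and similarity to $U$ then follows from Sz.-Nagy's theorem together with the rigidity of quasiaffine transforms between unitaries. The only cosmetic difference is that you establish $z^{k}\psi_{k}=1$ inside $A^{+}(E)$ via injectivity of the imbedding \eqref{1.1}, whereas the paper deduces $f_{k}(T)T^{k}=I$ from one more application of the intertwining relation $f_{k}(T)T^{k}\prec U^{k}f_{k}(U)=I$.
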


\begin{proof} We set $M=\sup_{n\geq 0}\|T^n\|$.
If  $f\in A^+(\mathbb T)$, then $f(T)$ is well defined, 
and $\|f(T)\|\leq M\|f\|_{A^+(\mathbb T)}$.
Since $T\prec U$, we have $f(T)\prec f(U)$. If $f\in I^+(E)$, then $f(U)=0$, 
and from the relation $f(T)\prec f(U)$ we conclude that 
$f(T)=0$. Thus, the functional calculus for $T$ on the algebra $A^+(E)$
is well defined. Let $k\in \mathbb N$ be fixed. Since $E$ is an $AA^+$-set, there exists 
a function $f_k\in A^+(\mathbb T)$
such that $f_k(\zeta)= \zeta^{-k}$ for every $\zeta\in E$. 
Clearly, $f_k(U)=U^{-k}$.
We have $ f_k(T)T^k = T^k  f_k(T)\prec U^k  f_k(U)=I$, 
and we conclude that  $ f_k(T)T^k = T^k  f_k(T)=I$.
Thus, $T^k$ is invertible, and $T^{-k}= f_k(T)$. Furthermore, 
$$\|T^{-k}\|=\| f_k(T)\|\leq M\|f_k\|_{A^+(E)}\leq MK(E) 
\|\zeta^{-k}\|_{A(E)}\leq MK(E).$$
\end{proof}

Again,  there exist many  closed sets of zero  Lebesgue measure 
that are not $AA^+$-sets.
Moreover, for an $AA^+$-set $E$, the norm $K(E)$ of the inverse 
to  imbedding  \eqref{1.1}  can be arbitrary large \cite{5}, \cite{7}.   
A question arises if estimate   \eqref{1.2}  exact.  
It will be shown in this paper that, for sets  satisfying 
some metric condition (see Definition \ref{def3.1}),
this estimate is not exact. Namely,  for every $K>0$ 
there exists an $AA^+$-set $E$ such that 
$K(E)\geq K$  \cite{5}, \cite{7}, but the  estimate of the left part of 
 \eqref{1.2} depends only on $\sup_{n\geq 0}\|T^n\|$
(Theorem \ref{th3.5}).
 
The paper is organized as follows. In Section 2, some general propositions
on power bounded operators that are quasisimilar 
to singular unitaries are proved. In Section 3, these propositions 
are applied to unitary operators whose spectral measure 
is supported on the sets satisfying some metric condition.

In the rest of Introduction, the notation and definitions are given.

 By $I$ the identity operator is denoted; if it is needed, 
the space on which it acts will be mentioned in  the lower index.
Let $T$ be a power bounded operator on a Hilbert space $\mathcal H$.
$T$ is of class $C_{1\cdot}$, if $\inf_{n\geq 0}\|T^nx\| > 0$
for every $x\in \mathcal H$, $x\neq 0$. $T$ is of class
 $C_{0\cdot}$, if  $\lim_n\|T^nx\| = 0$
for every $x\in \mathcal H$.
By $C_{\cdot 1}$ and   $C_{\cdot 0}$ the classes of
 power bounded operators $T$
such that  $T^\ast\in C_{1\cdot}$ and   $T^\ast\in C_{0\cdot}$, 
respectively, are denoted. 
As usually, $C_{11}= C_{1\cdot}\cap C_{\cdot 1}$,
  $C_{10}= C_{1\cdot}\cap C_{\cdot 0}$, and so on.
By $\operatorname{Lat}T$,  $\operatorname{Hyplat}T$, and 
 $\operatorname{Hyplat}_1T$ the invariant subspace lattice, 
the hyperinvariant subspace lattice, and the lattice 
of hyperinvariant subspaces
for power bounded operator $T$  such that the  restrictions of $T$ 
 on these subspaces belong to the class $C_{11}$, respectively,
are denoted (see \cite{9}).

Let $U$ be a unitary operator on a Hilbert space $\mathcal H$. There exists a finite positive Borel measure $\mu$ on the unit circle $\mathbb T$ having the following property:
for every $x$, $y\in \mathcal H$ there exists a function $f_{x,y}\in L^1(\mu)$ 
such that $(U^n x,y)=\int_{\mathbb T}\zeta^n  f_{x,y}(\zeta)d\mu(\zeta)$ for every
$n\in\mathbb Z$
(of course, $\int_\sigma f_{x,y}d\mu = (\mathbb E(\sigma) x,y)$, 
where  $\mathbb E$ is the (operator valued) spectral
measure of $U$, and $\sigma\subset\mathbb T$ is a Borel set). 
The measure $\mu$ is called the scalar spectral measure of $U$.
In other words, the (operator valued) spectral measure and the scalar 
spectral measure of a unitary operator are mutually absolutely continuous.
A unitary operator is called singular, 
if its spectral measure is singular with respect to Lebesgue measure 
(arc length  measure) on  $\mathbb T$. Recall that every  invariant subspace
of a singular
unitary operator is reducing, that is, 
its orthogonal complement is also invariant, therefore, the restriction of
 a singular unitary operator on its  invariant subspace is a unitary operator.    
Let  a sequence $\{n_k\}_k$ and a function $\varphi\in L^\infty(\mu)$ be such that
$\zeta^{n_k}\to_k \varphi$ in the weak-star topology on $L^\infty(\mu)$.
Then $U^{n_k}\to_k  \varphi(U)$ in the weak operator topology.

Let $T$ be an operator on a Hilbert space. Then $T$ is similar to a unitary operator
 if and only if $\sup_{n\in\mathbb Z}\|T^n\|<\infty$ (\cite{18}, see also \cite{19}).
We will  use this fact in what follows without additional references.

To conclude Introduction, we note that in \cite{14} an example 
of a power bounded operator which is quasisimilar to a unitary 
operator and is not similar to a contraction is constructed, 
but the unitary operator from this example is
 the bilateral shift of infinite multiplicity.

\section{Some general results}

In this section, we prove the main result of the paper (Theorem \ref{th2.11}), 
which claims that countable orthogonal sum of ``good" 
(in the sense of the present paper) unitary operators rests ``good". 

 The following simple lemmas are given for convenience of references.

\begin{lemma}\label{lem2.1} Let $T:\mathcal H\to\mathcal H$ be a power bounded operator. 
We set $M=\sup_{n\geq 0}\|T^n\|$.
Then  $$\limsup_n \|T^nx\|\leq M\liminf_n \|T^nx\| \ \ \text{for every} 
\ \  x\in \mathcal H.$$\end{lemma}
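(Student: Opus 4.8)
The plan is to use the power boundedness to control $\|T^n x\|$ along subsequences. Fix $x \in \mathcal H$ with $x \neq 0$ (the case $x = 0$ being trivial) and pick an increasing sequence $\{m_j\}_j$ realizing the $\liminf$, that is, $\|T^{m_j} x\| \to \liminf_n \|T^n x\|$. Symmetrically, pick an increasing sequence $\{n_k\}_k$ realizing the $\limsup$, that is, $\|T^{n_k} x\| \to \limsup_n \|T^n x\|$. The key observation is that for any two nonnegative integers $p$ and $q$ we have
\[
\|T^{p+q} x\| = \|T^p (T^q x)\| \leq M \|T^q x\|,
\]
since $\|T^p\| \leq M$.

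The idea now is to compare the two sequences by sliding one past the other. For each $k$, since $\{m_j\}_j$ is unbounded, choose $j = j(k)$ with $m_{j(k)} \geq n_k$, and write $m_{j(k)} = n_k + r_k$ with $r_k \geq 0$. Then
\[
\|T^{m_{j(k)}} x\| = \|T^{n_k + r_k} x\| \geq \tfrac{1}{M}\,\big\|T^{r_k}\,(T^{r_k+n_k} x)\big\| \cdot \tfrac{1}{1}
\]
is not quite the right direction; instead run the inequality the other way. Actually the clean route is: $\|T^{n_k} x\| = \|T^{n_k}x\|$, and we want an \emph{upper} bound on $\limsup$ in terms of $\liminf$, so we should dominate $\|T^{n_k} x\|$ by $M$ times something converging to the $\liminf$. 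So instead, for each $k$ choose $j(k)$ with $m_{j(k)} \geq n_k$ and set $r_k = m_{j(k)} - n_k \geq 0$; this gives no help since $T^{m_{j(k)}} x = T^{r_k}(T^{n_k} x)$ bounds the wrong quantity. The correct pairing is to choose, for each $j$, an index $k = k(j)$ with $n_{k(j)} \geq m_j$, write $n_{k(j)} = m_j + s_j$ with $s_j \geq 0$, and conclude
\[
\|T^{n_{k(j)}} x\| = \|T^{s_j}(T^{m_j} x)\| \leq M \|T^{m_j} x\|.
\]

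The remaining subtlety is that $\{\|T^{n_{k(j)}} x\|\}_j$ is a subsequence of $\{\|T^{n_k} x\|\}_k$, so passing to a further subsequence if necessary we may assume it converges, and its limit is at most $\limsup_n \|T^n x\|$ but need not equal it. To fix this I would reverse the roles once more: choose the sequence $\{n_k\}$ realizing the $\limsup$ first, and then for each $k$ select $m_{j(k)}$ from the $\liminf$-sequence with $m_{j(k)} \geq n_k$ --- wait, that again bounds the wrong side. The genuinely correct and simplest argument: since $\liminf_n \|T^n x\| = \lim_{N\to\infty} \inf_{n \geq N} \|T^n x\|$, for every $\varepsilon > 0$ there is $N$ with $\inf_{n \geq N}\|T^n x\| \leq \liminf_n \|T^n x\| + \varepsilon$; pick $n_0 \geq N$ with $\|T^{n_0} x\| \leq \liminf_n \|T^n x\| + \varepsilon$. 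Then for \emph{every} $n \geq n_0$,
\[
\|T^n x\| = \|T^{n - n_0}(T^{n_0} x)\| \leq M \|T^{n_0} x\| \leq M\big(\liminf_n \|T^n x\| + \varepsilon\big).
\]
Taking $\limsup$ over $n$ and then letting $\varepsilon \to 0$ yields $\limsup_n \|T^n x\| \leq M \liminf_n \|T^n x\|$.

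The main (and only) obstacle is getting the direction of the semigroup inequality $\|T^{p+q} x\| \leq M\|T^q x\|$ aligned correctly with which of $\liminf$/$\limsup$ one is trying to bound; once one fixes a near-minimal index $n_0$ and uses it as a "base point" past which all powers are controlled, the estimate is immediate and uniform in the tail, so no subsequence extraction is even needed.
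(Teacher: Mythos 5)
Your final argument is correct and rests on the same idea as the paper's proof: the estimate $\|T^{n}x\|\leq M\|T^{n_0}x\|$ for $n\geq n_0$ applied with a base index $n_0$ at which $\|T^{n_0}x\|$ is within $\varepsilon$ of the $\liminf$; the paper phrases this by interlacing a subsequence realizing the $\limsup$ with one realizing the $\liminf$, while your $\varepsilon$-tail formulation avoids subsequence extraction altogether. The false starts in the write-up should be deleted, but the argument you settle on is complete.
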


\begin{proof} Let $x\in \mathcal H$ be fixed, and let the sequences $\{n_k\}_k$,  
$\{\ell_j\}_j$ be such that  
$\limsup_n \|T^nx\|=\lim_j \|T^{\ell_j}x\|$, 
$\liminf_n \|T^nx\|=\lim_k \|T^{n_k}x\|$.
There exists a sequence $\{j_k\}_k$ such that 
$\ell_{j_k}>n_k$, $k=1,2,\ldots$. We have
$$\limsup_n \|T^nx\|=\lim_k \|T^{\ell_{j_k}}x\|=
\lim_k \|T^{\ell_{j_k}-n_k}T^{n_k}x\|
$$
$$\leq M\lim_k \|T^{n_k}x\|= M\liminf_n \|T^nx\|.$$ \end{proof} 

\begin{lemma}\label{lem2.2} $1)$ Let $U$ be a unitary operator having 
the following property:

\noindent $(\ast)$ if $T$ is a power bounded operator 
such that $T\prec U$, then $T\approx U$.

\noindent Let $\mathcal E\in \operatorname{Lat}U$. 
Then $U|_{\mathcal E}$ has  property $(\ast)$.

 $2)$ Let $U$ be a unitary operator having the following property:

\noindent $(\ast\ast)$ if $T$ is a power bounded operator such that $T\sim U$, 
then $T\approx U$.

\noindent Let $\mathcal E\in \operatorname{Lat}U$. 
Then $U|_{\mathcal E}$ has  property $(\ast\ast)$.\end{lemma}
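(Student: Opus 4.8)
The plan is to reduce the statement about a restriction $U|_{\mathcal E}$ to the ambient hypothesis on $U$ by building a ``dilation-type'' construction: given a power bounded $S$ with $S \prec U|_{\mathcal E}$ (resp. $S \sim U|_{\mathcal E}$), I would manufacture a power bounded operator $T$ on a larger space with $T \prec U$ (resp. $T \sim U$), apply property $(\ast)$ (resp. $(\ast\ast)$) to get $T \approx U$, and then read off $S \approx U|_{\mathcal E}$ from the invariant/reducing structure. Since $U$ is (implicitly, being the object of interest in this paper) a singular unitary — or in any case the construction only needs that $\mathcal E \in \operatorname{Lat}U$ — I would write $\mathcal H = \mathcal E \oplus \mathcal F$ where $\mathcal F = \mathcal H \ominus \mathcal E$; note $\mathcal F$ need not be $U$-invariant in general, but for the applications in this paper $U$ is singular, so $\mathcal E$ reduces $U$ and $\mathcal F$ is $U$-invariant as well. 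Hence write $U = U_1 \oplus U_2$ with $U_1 = U|_{\mathcal E}$, $U_2 = U|_{\mathcal F}$.

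For part $1)$: let $S$ be power bounded with a quasiaffinity $Y : \mathcal G \to \mathcal E$ intertwining $S$ and $U_1$. Put $T = S \oplus U_2$ on $\mathcal G \oplus \mathcal F$; this is power bounded with $\sup_n\|T^n\| = \max(\sup_n\|S^n\|, 1)$. The operator $X = Y \oplus I_{\mathcal F} : \mathcal G \oplus \mathcal F \to \mathcal E \oplus \mathcal F = \mathcal H$ is a quasiaffinity (injective, dense range, because $Y$ is) and intertwines $T$ with $U$. So $T \prec U$, and by $(\ast)$, $T \approx U$. Now I would invoke the standard fact that similarity to a unitary is governed by uniform two-sided power boundedness: $T \approx U$ with $U$ unitary forces $\sup_{n\in\mathbb Z}\|T^n\| < \infty$ (this is exactly the Sz.-Nagy/Dixmier fact quoted in the Introduction). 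Since $S = T|_{\mathcal G \oplus 0}$ is a direct summand — more precisely $\|S^n\| \le \|T^n\|$ for all $n$, and $S$ is invertible with $\|S^{-n}\| \le \|T^{-n}\|$ — we get $\sup_{n \in \mathbb Z}\|S^n\| < \infty$, so $S$ is similar to a unitary operator; call it $V$. Finally $S \approx V$ and $S \prec U_1$ give $V \prec U_1$ with both unitary, hence $V \cong U_1$ by the cited fact \cite[II.3.4]{19}, and therefore $S \approx U_1 = U|_{\mathcal E}$.

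For part $2)$: the same construction gives $T \prec U$; I additionally need $U \prec T$. From $S \sim U_1$ take a quasiaffinity $Z : \mathcal E \to \mathcal G$ intertwining $U_1$ and $S$; then $Z \oplus I_{\mathcal F} : \mathcal H \to \mathcal G \oplus \mathcal F$ is a quasiaffinity intertwining $U$ and $T$, so $U \prec T$ and hence $T \sim U$. Apply $(\ast\ast)$ to conclude $T \approx U$, and then finish exactly as in part $1)$: $\sup_{n\in\mathbb Z}\|S^n\|<\infty$, so $S$ is similar to a unitary $V$, and $V \prec U_1$ (indeed $V \sim U_1$) with both unitary yields $V \cong U_1$, whence $S \approx U|_{\mathcal E}$.

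The one genuine point to be careful about is the passage $\mathcal E \in \operatorname{Lat}U \Rightarrow \mathcal E$ reduces $U$, which is what makes $T = S \oplus U_2$ legitimate; this is precisely the property of singular unitaries recalled in the Introduction (``every invariant subspace of a singular unitary operator is reducing''), so either $U$ is assumed singular here or the lemma should be read in that context — and indeed in this paper properties $(\ast)$, $(\ast\ast)$ are only ever invoked for singular $U$. Everything else is bookkeeping: direct sums are power bounded with the obvious norm bound, direct sums of quasiaffinities are quasiaffinities, restriction to a reducing subspace does not increase the norm of any (positive or negative) power, and ``two-sided power bounded $\Leftrightarrow$ similar to unitary'' plus ``quasiaffine transform of a unitary, if itself similar to a unitary, is unitarily equivalent to it'' close the argument. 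I expect no real obstacle beyond stating these reductions cleanly.
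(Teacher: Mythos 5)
Your construction is the paper's own: form $T=S\oplus U|_{\mathcal E^{\perp}}$, note $T\prec U$ (resp.\ $T\sim U$) via $Y\oplus I$ (resp.\ together with $Z\oplus I$), apply $(\ast)$ (resp.\ $(\ast\ast)$) to get $T\approx U$, deduce that $S$ is similar to a unitary, and then combine $S\prec U|_{\mathcal E}$ with the fact that a unitary quasiaffine transform of a unitary is unitarily equivalent to it ({\cite[II.3.4]{19}}). The only cosmetic difference is at the penultimate step: you extract similarity of $S$ to a unitary from $\sup_{n\in\mathbb Z}\|T^{n}\|<\infty$ and the direct-summand inequalities, whereas the paper transports the invariant subspace of $T$ through the similarity $T\approx U$ and uses that restrictions of $U$ to invariant subspaces are unitary; both are fine.

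The genuine gap is precisely the point you flag and then wave away: the lemma assumes only that $U$ is a unitary operator with property $(\ast)$ (resp.\ $(\ast\ast)$), not that $U$ is singular, so you are not entitled to ``read the lemma in that context.'' Everything in your argument rests on $\mathcal E$ being reducing, i.e.\ on the decomposition $U=U|_{\mathcal E}\oplus U|_{\mathcal E^{\perp}}$; for a general unitary an invariant subspace need not reduce, and $U|_{\mathcal E}$ need not even be unitary (think of the bilateral shift and the Hardy subspace). The paper closes this with a one-line argument you must supply: property $(\ast)$ (or $(\ast\ast)$) already forces $U$ to be singular, because if $U$ had a nonzero absolutely continuous part, then by {\cite[II.3.5, VI.3.5, IX.1.2]{19}} there is a contraction $T$ with $T\sim U$ (hence $T\prec U$) which is not similar to $U$, contradicting the hypothesis. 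With that observation inserted at the start, your proof is complete and coincides with the paper's; without it, the singularity of $U$ is an unproved assumption rather than a consequence of the hypotheses.
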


\begin{proof}  
Let $U$ be a unitary operator having  property  $(\ast)$. 
Then $U$ is singular.
Indeed, if  $U$ has 
an absolutely continuous part, then one can find a contraction $T$ such that
 $T\sim U$, but $T$ is not similar to $U$ {\cite[II.3.5, VI.3.5, IX.1.2]{19}}. Since $U$ is singular, every invariant subspace  $\mathcal E$ of $U$ is reducing, that is, $\mathcal E^\perp$ is also invariant.

Let $U$ has property $(\ast)$,  let   $\mathcal E\in \operatorname{Lat}U$, and 
let $T_1$ be a power bounded operator such that 
$T_1\prec U|_{\mathcal E}$. We set
$T=T_1\oplus U|_{\mathcal E^\perp}$. Clearly, 
$T\prec U|_{\mathcal E}\oplus U|_{\mathcal E^\perp}\cong U$, therefore,  $T\approx U$.
Since $T_1$ is the  restriction of $T$ on its invariant subspace, 
 $T_1$ is similar to the  
restriction of $U$ on some of  its invariant subspace,  
thus, $T_1$ is similar to some unitary operator.
Since $T_1\prec U|_{\mathcal E}$,  we have   $T_1\approx U|_{\mathcal E}$ {\cite[II.3.4]{19}}, see Introduction of the paper.
Part 1 of the lemma is proved. The proof of part 2
is analogous.\end{proof}

\begin{lemma}\label{lem2.3} Suppose that $R$, $A:\mathcal K\to\mathcal K$ are operators and  
  $\{n_k\}_k$ is a sequence such that $R^{n_k}\to_k A$ in 
the  weak operator topology.
Suppose that $T:\mathcal H\to\mathcal H$ is a power bounded operator, and 
$X:\mathcal H\to\mathcal K$ is a quasiaffinity such that $XT=RX$. Then there exists
 an operator $B:\mathcal H\to\mathcal H$ such that  $T^{n_k}\to_k B$ in 
 the weak operator topology,
$XB=AX$, and $\|B\|\leq\sup_{n\geq 0}\|T^n\|$.\end{lemma}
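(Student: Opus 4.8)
The plan is to construct $B$ as a weak operator topology limit of a suitable subnet (or subsequence) of $\{T^{n_k}\}_k$ and then transfer the intertwining relation through $X$. First I would note that since $T$ is power bounded, the sequence $\{T^{n_k}\}_k$ is bounded in operator norm by $M := \sup_{n\geq 0}\|T^n\|$. The closed ball of radius $M$ in $\mathcal B(\mathcal H)$ is compact in the weak operator topology (equivalently, in the weak-$\ast$ topology when $\mathcal B(\mathcal H)$ is viewed as the dual of the trace class), so $\{T^{n_k}\}_k$ has a convergent subnet with some limit $B$ satisfying $\|B\|\leq M$. The subtlety is that a priori I only get a subnet, so I must either argue along the chosen subnet throughout, or (since $\mathcal H$ is separable) pass to an honest subsequence $\{n_{k_j}\}_j$ of $\{n_k\}_k$ using the metrizability of bounded sets in the weak operator topology on a separable Hilbert space; I will take the latter route and then show the limit does not depend on the choice, so that the full sequence $\{T^{n_k}\}_k$ converges.

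Next I would verify the intertwining relation $XB = AX$. For any $x\in\mathcal H$ and $y\in\mathcal K$, along the subsequence we have $(XT^{n_{k_j}}x, y) = (T^{n_{k_j}}x, X^\ast y)\to (Bx, X^\ast y) = (XBx,y)$; on the other hand $XT^{n_{k_j}} = R^{n_{k_j}}X$, and $(R^{n_{k_j}}Xx,y)\to (AXx,y)$ by the hypothesis that $R^{n_k}\to A$ in the weak operator topology. Hence $(XBx,y) = (AXx,y)$ for all $x,y$, giving $XB = AX$.

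Finally I would upgrade convergence of the subsequence to convergence of the whole sequence $\{T^{n_k}\}_k$. Suppose $\{T^{n_{k_i}}\}_i$ is any subsequence; by the same compactness-plus-metrizability argument it has a further subsequence converging weakly to some $B'$ with $\|B'\|\leq M$ and $XB' = AX$. Then $X(B - B') = 0$, and since $X$ is a quasiaffinity it is injective, so $B = B'$. Thus every subsequence of $\{T^{n_k}\}_k$ has a further subsequence converging to the same limit $B$, which forces $T^{n_k}\to_k B$ in the weak operator topology. The main obstacle, such as it is, is the bookkeeping around subnets versus subsequences; invoking separability of $\mathcal H$ to work with subsequences throughout, and using injectivity of $X$ to pin down the limit uniquely, resolves it cleanly.
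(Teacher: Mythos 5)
Your proof is correct, but it takes a different route from the paper's. You obtain $B$ by compactness: the ball of radius $M$ is WOT-compact, metrizable on bounded sets because $\mathcal H$ is separable (an assumption the paper does make globally), so you extract a weakly convergent subsequence, verify $XB=AX$ along it, and then upgrade to convergence of the whole sequence $\{T^{n_k}\}_k$ by the standard ``every subsequence has a further subsequence with the same limit'' argument, where the limit is pinned down by the injectivity of $X$ applied to $X(B-B')=0$. The paper avoids compactness and subsequence bookkeeping altogether: for fixed $x$ it observes that $(T^{n_k}x,X^\ast g)=(R^{n_k}Xx,g)\to(AXx,g)$ for every $g\in\mathcal K$, so the limits $\lim_k(T^{n_k}x,y)$ exist for $y$ in the dense subspace $X^\ast\mathcal K$ (dense precisely because $\ker X=\{0\}$); combined with the uniform bound $\|T^{n_k}x\|\leq M\|x\|$ this gives weak convergence of $T^{n_k}x$ against every $y\in\mathcal H$, and $Bx$ is defined pointwise as the weak limit, with $\|Bx\|\leq M\|x\|$ and $XB=AX$ immediate. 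Both arguments ultimately rest on the injectivity of $X$ (you use it to identify the limit; the paper uses it through density of the range of $X^\ast$), but the paper's version is more elementary and does not need metrizability or the subsequence-uniqueness device, while yours trades that for a soft compactness argument that some may find quicker to believe.
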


 \begin{proof} We set $M=\sup_{n\geq 0}\|T^n\|$. Let $x\in\mathcal H$ be fixed. 
If $g\in\mathcal K$, then 
\begin{equation}\label{2.1}(T^{n_k}x,X^\ast g)=(XT^{n_k}x, g)=(R^{n_k}Xx, g)\to_k(AXx,g). \end{equation}
Since
\begin{equation}\label{2.2}\sup_{k\geq 0}\|T^{n_k}x\|\leq M\|x\|<\infty \end{equation} 
and $X^\ast \mathcal K$ is dense in $\mathcal H$, we have that
 $\lim_k(T^{n_k}x,y)$ exists for every $y\in\mathcal H$.
By the Banach--Steinhaus theorem, there exists $h\in\mathcal H$ such that 
$$\lim_k(T^{n_k}x,y)=(h,y) \ \ \text{  for every } \ y\in\mathcal H.$$ We set $Bx=h$. From \eqref{2.2} we conclude that 
 $\|Bx\|\leq M\|x\|$.
It is clear from the definition of $B$ and from \eqref{2.1} that  $T^{n_k}\to_k B$ 
in the weak operator topology and $XB=AX$.  \end{proof} 

The following lemma shows that, for some unitary operators, the assumption 
on quasisimilarity in the question regarded in the paper can be replaced by 
the assumption that a power bounded operator is a quasiaffine transform of 
this unitary. We note that unitary operators satisfying the conditions of 
Theorems \ref{th2.5}, \ref{th2.12}, and \ref{th3.5} satisfy the conditions of Lemma \ref{lem2.4}, too.

\begin{lemma}\label{lem2.4} Let $U:\mathcal K\to\mathcal K$ be a unitary operator 
having the following properties:

$1)$ if  $x\in\mathcal K$ is such that  $U^n x\to_n 0$ in the weak topology,
then $x=0$;

$2)$  if $T$ is a power bounded operator such that $T\sim U$, then $T\approx U$.

Then  if $T$ is a power bounded operator such that $T\prec U$, 
then $T\approx U$.\end{lemma}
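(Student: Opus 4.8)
The plan is to start from a power bounded operator $T:\mathcal H\to\mathcal H$ with a quasiaffinity $X:\mathcal H\to\mathcal K$ intertwining $T$ and $U$, i.e. $XT=UX$, and to manufacture from $T$ a \emph{second} intertwining in the reverse direction, so that $T\sim U$ and hypothesis 2) applies. The natural candidate for the backward quasiaffinity comes from a weak limit of $T^{n_k}$ along a sequence for which $U^{n_k}$ converges weakly; property 1) is what forces that limit to be injective. Concretely, first I would fix a scalar spectral measure $\mu$ for $U$ and choose a sequence $\{n_k\}$ with $\zeta^{n_k}\to 1$ weak-$\ast$ in $L^\infty(\mu)$ (for instance by a standard recurrence/Poincaré argument applied to the rotations $\zeta\mapsto\zeta^{n}$, or simply noting $1$ lies in the weak-$\ast$ closure of $\{\zeta^{n}:n\ge 1\}$ since constants are reached along suitable powers); then $U^{n_k}\to I$ in the weak operator topology.

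Next I would invoke Lemma 1.3 with $R=U$ and $A=I$: this yields an operator $B:\mathcal H\to\mathcal H$ with $T^{n_k}\to B$ weakly, $\|B\|\le M:=\sup_{n\ge 0}\|T^n\|$, and, crucially, $XB=IX=X$. Since $X$ is injective, $XB=X$ forces $B$ to be the identity on $\mathcal H$ — wait, more carefully: $X(B-I)=0$ and $\ker X=\{0\}$ give $B=I$. Hmm, but that would make the argument trivial and wouldn't use property 1) at all, so I must be choosing the wrong limit: the point is that we do \emph{not} want $A=I$ but rather we want to run the argument so that $B$ intertwines $U$ and $T$. The correct move is to pass the limit through $X$ from the other side. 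So instead: for the quasiaffinity $X$ we also have $T^{n}X^{*}=X^{*}U^{n}$? No — $X$ intertwines $T$ and $U$, hence $X^{*}$ intertwines $U^{*}$ and $T^{*}$, which is the wrong pairing. The genuine backward arrow must be produced as $Y:=\text{w-}\lim X^{*}U^{n_k}$ suitably normalized, or by applying Lemma 1.3 to $T^{*}$; this is the step I expect to be the main obstacle, namely getting the \emph{direction} of the new intertwiner right and verifying its range is dense and kernel is trivial.

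Granting that, here is the clean route I would actually write. Apply Lemma 1.3 to the data $R=U$, $A=\text{w-}\lim U^{n_k}$ with $n_k$ chosen so that $\zeta^{n_k}\to\overline{g}$ for a well-chosen $g$; taking $g\equiv 1$ gives $A=I$ and $B$ with $XB=X$, whence $B=I$ and so $\text{w-}\lim T^{n_k}=I_{\mathcal H}$. Now $T$ is power bounded with a subsequence of powers tending weakly to $I$, so $\sup_{n}\|T^{n}\|$ together with Lemma 1.1-type reasoning — actually here one uses that $T$ has no nontrivial vector with $T^{n}x\to 0$ weakly either: if $T^{n}x\to 0$ weakly then applying $X$ gives $U^{n}Xx\to 0$ weakly, so by property 1) $Xx=0$, so $x=0$. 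Hence $T\in C_{1\cdot}$ in the weak sense, and symmetrically, because $\text{w-}\lim T^{n_k}=I$, for each $x\ne 0$ we get $(T^{n_k}x,x)\to\|x\|^2\ne 0$, so $T^{*}$ has no nonzero vector killed weakly by its powers, giving $T\in C_{\cdot 1}$; thus $T\in C_{11}$. A $C_{11}$ power bounded operator quasiaffinely transformed into $U$ is in fact quasisimilar to $U$: the adjoint relation $X^{*}U^{n_k}\to X^{*}$ weakly, processed through Lemma 1.3 applied to $T^{*}\prec U^{*}$, produces $Y:\mathcal K\to\mathcal H$ with $YU=TY$ and $\ker Y=\{0\}$, $\operatorname{clos}Y\mathcal K=\mathcal H$ (density from $C_{1\cdot}$ of $T$, injectivity from property 1) of $U$). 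Then $T\sim U$, so property 2) gives $T\approx U$. The one delicate point, and the place I would spend the most care, is verifying the quasiaffinity properties of $Y$ from the weak limits; everything else is bookkeeping with Lemmas 1.2 and 1.3.
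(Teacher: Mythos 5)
Your argument has a genuine gap at its very first step: you assume there is a sequence $\{n_k\}$ with $\zeta^{n_k}\to 1$ in the weak-$\ast$ topology of $L^\infty(\mu)$ (so that $U^{n_k}\to I$ in the weak operator topology). Hypothesis 1) gives nothing of the sort: it only says that no nonzero vector is asymptotically annihilated weakly, whereas the weak-$\ast$ cluster points of $\{\zeta^n\}_{n\geq1}$ need not contain the constant $1$, nor any invertible element of $L^\infty(\mu)$. (For Lebesgue measure every cluster point is $0$; Example \ref{exa2.10} of the paper exhibits singular unitaries whose attainable limits are $\kappa I$ with $0<|\kappa|<1$; and for an orthogonal sum of unitaries each of which admits a good return sequence, no single sequence need work for all summands simultaneously --- this is exactly the difficulty that Theorem \ref{th2.11} is built to handle.) If such a sequence always existed under 1), the lemma would reduce to Theorem \ref{th2.5} and most of the paper would be superfluous. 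Everything downstream inherits this flaw: $B=I$, the weak convergence $T^{n_k}\to I$, and the claimed $C_{\cdot1}$-type behaviour of $T$ all rest on it. (The one step that does work is your observation that hypothesis 1) forbids a nonzero $x$ with $T^nx\to0$ weakly, since $XT^nx=U^nXx$.)

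The second gap is the one you flag yourself: producing a quasiaffinity $Y:\mathcal K\to\mathcal H$ with $YU=TY$. Note that $X^*$ intertwines $U^*$ with $T^*$, not $U$ with $T$, and a weak limit of $X^*U^{n_k}$ has no reason to intertwine correctly; no construction is actually given. The paper's proof avoids both obstacles. By K\'erchy's decomposition ({[9, Lemma 1]}) one splits off the maximal $C_{\cdot1}$-subspace $\mathcal H_1$; the restriction $T_1=T|_{\mathcal H_1}$ is of class $C_{11}$, hence quasisimilar to a unitary by [9], and since $T_1\prec U|_{\mathcal K_1}$ with $\mathcal K_1=\operatorname{clos}X\mathcal H_1$, and quasiaffine transforms between unitaries are unitarily equivalent, one gets $T_1\sim U|_{\mathcal K_1}$; hypothesis 2) together with Lemma \ref{lem2.2} then gives $T_1\approx U|_{\mathcal K_1}$. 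Since $T_1$ is similar to a unitary, $\mathcal H=\mathcal H_1\dotplus\mathcal H_0$ with $T_0=T|_{\mathcal H_0}\in C_{\cdot0}$ (by [10]), and [14, Lemma 3.2] applied to $T_0\prec U|_{\operatorname{clos}X\mathcal H_0}$ yields $U^nx\to0$ weakly for every $x\in\operatorname{clos}X\mathcal H_0$; hypothesis 1) then forces $\mathcal H_0=\{0\}$, so $T=T_1\approx U$. In other words, hypothesis 1) is used to kill a possible $C_{\cdot0}$ part of $T$, not to produce a return sequence for $U$; if you want to salvage your outline, that is the mechanism you would need to replace your weak-limit step with.
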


 \begin{proof}  Let $T:\mathcal H\to\mathcal H$ be a power bounded operator.
By {\cite[Lemma 1]{9}}, there exists $\mathcal H_1\in\operatorname{Lat}T$ having the following properties:
 $T|_{\mathcal H_1}\in C_{\cdot 1}$, 
$P_{\mathcal H\ominus \mathcal H_1} T | _{\mathcal H\ominus \mathcal H_1} \in C_{\cdot 0}$,
and if $\mathcal E\in\operatorname{Lat}T$ is such that   $T|_{\mathcal E}\in C_{\cdot 1}$,
then $\mathcal E\subset\mathcal H_1$  (by $P_{\mathcal H\ominus \mathcal H_1}$ 
the orthogonal projection on the space $\mathcal H\ominus \mathcal H_1$ is denoted;
 the definitions of classes 
$C_{\cdot 1}$ and $C_{\cdot 0}$ are recalled in Introduction). 
The space $\mathcal H_1$ is called the $C_{\cdot 1}$-subspace of $\mathcal H$ with respect to $T$.

Now let $X:\mathcal H\to\mathcal K$ be a quasiaffinity such that $XT=UX$. 
We set $T_1=T|_{\mathcal H_1}$ and  $\mathcal K_1=\operatorname{clos}X\mathcal H_1$.
Clearly,  $U|_{\mathcal K_1}$ is unitary and $T_1\prec U|_{\mathcal K_1}$.
By the  definition of the space ${\mathcal H_1}$, we have $T_1\in C_{11}$, therefore,
$T_1$ is quasisimilar to a unitary operator \cite{9}.
 From the relation  $T_1\prec U|_{\mathcal K_1}$ and {\cite[II.3.4]{19}}
(see Introduction) we conclude that 
   $T_1\sim U|_{\mathcal K_1}$.
By assumption 2) and Lemma \ref{lem2.2}, $T_1\approx U|_{\mathcal K_1}$.

Since $T_1=T|_{\mathcal H_1}$ is similar to a unitary operator, 
 there exists $\mathcal H_0\in\operatorname{Lat}T$ such that 
$\mathcal H=\mathcal H_1\dotplus\mathcal H_0$
(see \cite{10}; although the proposition in  \cite{10} is formulated for contractions, 
application of  results from  \cite{9} allows to repeat the proof for 
power bounded operators). 
We set $T_0=T|_{\mathcal H_0}$. We will show that    $T_0\in C_{\cdot 0}$.
Let  $\mathcal H_{01}$ be the $C_{\cdot 1}$-subspace of $\mathcal H_0$ 
with respect to $T_0$. We set
 $\mathcal H_{11}=\mathcal H_1\dotplus\mathcal H_{01}$. We have  
 $\mathcal H_{11}\in\operatorname{Lat}T$
and $T|_{\mathcal H_{11}}\in C_{11}$. 
Since $\mathcal H_1$ is  the $C_{\cdot 1}$-subspace of $\mathcal H$ 
with respect to $T$,
we have  $\mathcal H_{11}\subset\mathcal H_1$. Thus, we conclude that
 $\mathcal H_{01}=\{0\}$, that is, $T_0\in C_{\cdot 0}$.

We set $\mathcal K_0=\operatorname{clos}X\mathcal H_0$, and 
 we apply {\cite[Lemma 3.2]{14}} to the operators  $T_0$ and $U|_{\mathcal K_0}$.
Namely, from the relations $T_0\prec U|_{\mathcal K_0}$ and $T_0\in C_{\cdot 0}$
we obtain that $(U|_{\mathcal K_0})^{\ast n}\to_n 0$ in the weak operator topology.
But it means that $U^nx\to_n 0$ in the weak topology for every $x\in\mathcal K_0$.
 By assumption 1) of the lemma, $\mathcal K_0=\{0\}$, therefore,  $\mathcal H_0=\{0\}$.

Thus,  $\mathcal H_1 =\mathcal H$,  $\mathcal K_1 =\mathcal K$, and $T_1=T\approx U$.
 \end{proof} 

The following theorem is a modification of  {\cite[Theorem 2]{1}}.

\begin{theorem}\label{th2.5} Suppose that $U$ is a unitary operator, 
$A$ is an invertible operator,  
and $\{n_k\}_k$ is a sequence such that $U^{n_k}\to_k A$ 
in the weak operator topology.
Let $T$ be a power bounded operator such that $T\sim U$. We set  
$M=\sup_{n\geq 0}\|T^n\|$. Then $T\approx U$ and 
 $\sup_{n\leq 0}\|T^n\|\leq\|A^{-1}\|M^2$.\end{theorem}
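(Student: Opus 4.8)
The plan is to study, via Lemma~\ref{lem2.3}, the weak operator limit $B$ of the powers $T^{n_k}$, to prove that $B$ is invertible with $\|B^{-1}\|\le\|A^{-1}\|M$, and to read off the theorem from this. Since $T\sim U$, fix quasiaffinities $X\colon\mathcal H\to\mathcal K$ and $Y\colon\mathcal K\to\mathcal H$ with $XT=UX$ and $YU=TY$. Then $T\in C_{11}$: for $x\ne0$ one has $\|T^nx\|\ge\|Xx\|/\|X\|>0$, and, since $Y^\ast$ is injective and $Y^\ast T^\ast=U^\ast Y^\ast$, also $\|T^{\ast n}h\|\ge\|Y^\ast h\|/\|Y^\ast\|>0$ for $h\ne0$. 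Applying Lemma~\ref{lem2.3} to $(T,U)$, the operator $A$ and the sequence $\{n_k\}$ gives $B\colon\mathcal H\to\mathcal H$ with $T^{n_k}\to_kB$ in the weak operator topology, $XB=AX$, $\|B\|\le M$ and $BT=TB$; passing to the limit in $YU^{n_k}=T^{n_k}Y$ gives $BY=YA$, hence $Y^\ast B^\ast=A^\ast Y^\ast$. Because $A$ is invertible and $X,Y$ (hence $X^\ast,Y^\ast$) are injective, these relations give $\ker B=\{0\}$ and $\operatorname{clos}B\mathcal H=\mathcal H$, so $B$ is a quasiaffinity. For each $m\ge1$, a weak operator subsequential limit $C_m$ of $\{T^{n_k-m}\}_k$ exists (the ball of radius $M$ in the space of operators on $\mathcal H$ is weak operator sequentially compact, $\mathcal H$ being separable); the same arguments give $\|C_m\|\le M$, $C_mT=TC_m$, $XC_m=U^{-m}AX$, $C_mY=YU^{-m}A$, that $C_m$ is a quasiaffinity, and $B=T^mC_m=C_mT^m$ (all of this before knowing whether $T$ is invertible).

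Granting that $B$ is invertible, the conclusions follow quickly. From $B=T^mC_m$ we get $\operatorname{ran}T^m\supseteq\operatorname{ran}B=\mathcal H$, so $T^m$ is onto, while $\ker T^m=\{0\}$ because $T\in C_{1\cdot}$; hence $T^m$, and therefore $T$, is invertible, and $T^{-m}=C_mB^{-1}$, so $\|T^{-m}\|\le M\|B^{-1}\|$. With $\|B^{-1}\|\le\|A^{-1}\|M$ this yields $\sup_{n\le0}\|T^n\|\le\|A^{-1}\|M^2$, in particular $\sup_{n\in\mathbb Z}\|T^n\|<\infty$, so $T$ is similar to a unitary operator $V$; since $T\sim U$ forces $V\prec U$ and both are unitary, $V\cong U$, whence $T\approx V\cong U$, i.e.\ $T\approx U$.

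To obtain ``$B$ invertible'' and the bound $\|B^{-1}\|\le\|A^{-1}\|M$, I would work with the isometric asymptote. Let $X_T\colon\mathcal H\to\mathcal H_T$, $X_TT=V_TX_T$, with $\|X_Tx\|^2=\operatorname{LIM}_n\|T^nx\|^2$ for a fixed Banach limit $\operatorname{LIM}$; then $X_T$ is injective (again $\|T^nx\|\ge\|Xx\|/\|X\|$) with dense range, and, using $T\sim U$ as in the first paragraph, $V_T$ is a $C_{11}$-isometry, hence unitary, with $V_T\cong U$. Lemma~\ref{lem2.3} applied to $(T,V_T,X_T)$ gives $X_TB=\widetilde AX_T$, where $\widetilde A$ is the weak operator limit of $V_T^{n_k}$; under $V_T\cong U$ the operator $\widetilde A$ corresponds to $A$, so $\widetilde A$ is invertible and $\|\widetilde A^{-1}\|=\|A^{-1}\|$. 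This is exactly the situation treated in {\cite[Theorem 2]{1}}, whose argument shows that $B$ is invertible; once it is, $X_TB^{-1}=\widetilde A^{-1}X_T$ and $X_TT^n=V_T^nX_T$ for all $n\in\mathbb Z$ (using that $T$ is now invertible and $V_T$ unitary), and combining these with $\|X_T\|\le M$, $\|\widetilde A\|\le 1$ and $\|\widetilde A^{-1}\|=\|A^{-1}\|$ — keeping track of where the factor $M$ enters, which is exactly twice — gives $\|B^{-1}\|\le\|A^{-1}\|M$, hence $\|T^{-m}\|\le\|A^{-1}\|M^2$.

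The main obstacle is the invertibility of $B$, equivalently that $T$ is invertible at all. Lemma~\ref{lem2.3} delivers ``$B$ is a quasiaffinity'' essentially for free, but ``$B$ is bounded below'' is the delicate point: none of the intertwiners at hand — $X$, $Y$, or the asymptotic intertwiner $X_T$ — is known to be bounded below (if any were, $T\approx U$ would be immediate), so the only available leverage is that $A$ is invertible, and turning this hypothesis into a uniform lower bound for $B$ is the core of the proof, where the argument of {\cite[Theorem 2]{1}} is reproduced; the present theorem refines it only by tracking the constant.
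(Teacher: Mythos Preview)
Your overall structure is sensible and several of the ingredients are correct: $T\in C_{11}$, the construction of $B$ via Lemma~\ref{lem2.3}, the relations $XB=AX$, $BY=YA$, that $B$ is a quasiaffinity, and the auxiliary operators $C_m$ with $B=T^mC_m=C_mT^m$. The genuine gap is in the third paragraph, where you try to extract the constant. Your claim $\|B^{-1}\|\le\|A^{-1}\|M$ is not justified by the relations you list: from $X_TB^{-1}=\widetilde A^{-1}X_T$ you only get $\|X_TB^{-1}x\|\le\|A^{-1}\|\,\|X_Tx\|\le\|A^{-1}\|M\|x\|$, which bounds $\|X_TB^{-1}x\|$, not $\|B^{-1}x\|$. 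Converting one into the other requires $X_T$ to be bounded below, and any bound on $\|X_T^{-1}\|$ you can write down already uses $\sup_{n\le 0}\|T^n\|$ --- the very quantity you are trying to estimate. So the argument is circular, and at best yields $\|B^{-1}\|\le\|A^{-1}\|^2M^3$, hence $\|T^{-m}\|\le\|A^{-1}\|^2M^4$, not $\|A^{-1}\|M^2$. Citing \cite{1} does not help here, because the whole point of the theorem is to track the constant, and that is precisely the step you have not carried out.

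The paper's proof avoids the detour through $B^{-1}$ and $C_m$ entirely. It works only with the canonical intertwiner $X$ coming from the isometric asymptote, exploiting two facts you did not use: the weak lower semicontinuity of the norm, which gives $\|Bx\|\le\liminf_k\|T^{n_k}x\|$, and the Banach-limit inequality $\liminf_n\|T^nx\|\le\|Xx\|\le\limsup_n\|T^nx\|$ (formula~\eqref{2.3}). Chaining these with Lemma~\ref{lem2.1} and with $\|Xx\|\le\|A^{-1}\|M\|Bx\|$ (from $X=A^{-1}XB$), one obtains directly
\[
\|Bx\|\le\liminf_k\|T^{n_k-\ell}T^\ell x\|\le M\|XT^\ell x\|\le \|A^{-1}\|M^2\|BT^\ell x\|=\|A^{-1}\|M^2\|T^\ell Bx\|.
\]
Since $B$ has dense range (your argument via $BY=YA$ is exactly what is used), this becomes $\|y\|\le\|A^{-1}\|M^2\|T^\ell y\|$ for every $y$ and every $\ell$, giving left invertibility of $T$ with the correct constant in one stroke. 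No invertibility of $B$ is ever needed. The missing idea in your attempt is precisely this inequality $\|Bx\|\le\liminf_k\|T^{n_k}x\|$ together with the asymptote estimate~\eqref{2.3}; these are what turn the hypothesis ``$A$ invertible'' into a uniform lower bound for $T^\ell$ without circularity.
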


 \begin{proof}  Since  $T\sim U$, we have that $U$ is unitarily equivalent 
to the isometric asymptote of $T$.
Denote by $\mathcal H$ and $\mathcal K$ the spaces on which $T$ and $U$ act, and by
 $X:\mathcal H\to\mathcal K$ the canonical intertwining mapping. It is easy to see 
 from the definition of $X$ and 
properties of a Banach limit that
\begin{equation}\label{2.3} \liminf_n \|T^nx\|\leq \|Xx\|\leq\limsup_n \|T^nx\| \ \ \text{for every} 
 \ \  x\in \mathcal H \end{equation}
(see \cite{9}).  Clearly, $\|X\|\leq M$.
Of course, $U$, $A$, $T$  and $X$ satisfy the conditions of Lemma \ref{lem2.3}, 
therefore, there exists an
operator   $B:\mathcal H\to\mathcal H$ such that  $T^{n_k}\to_k B$ 
in the weak operator topology,
$XB=AX$, and $\|B\|\leq M$. Since $A$ is invertible, we have $X=A^{-1}XB$, 
therefore, 
\begin{equation}\label{2.4} \|Xx\|\leq\|A^{-1}\|M\|Bx\|\ \ \text{for every} \ \  x\in \mathcal H. \end{equation}
Clearly, $TB=BT$.

Let $\ell\in\mathbb N$ be fixed. Let $x\in\mathcal H$. Since  $T^{n_k}x\to_k Bx$  
weakly, 
$$\|Bx\|\leq\liminf_k\|T^{n_k}x\|=\liminf_k\|T^{n_k-\ell}T^\ell x\|
\leq\limsup_n\|T^nT^\ell x\|$$
$$\leq M\liminf_n\|T^nT^\ell x\|
\leq M\|XT^\ell x\|\leq\|A^{-1}\|M^2\|BT^\ell x\|=
\|A^{-1}\|M^2\|T^\ell Bx\|  $$
(we apply Lemma \ref{lem2.1}, \eqref{2.3}, and \eqref{2.4}). We obtain that 
\begin{equation}\label{2.5} \|Bx\|\leq\|A^{-1}\|M^2\|T^\ell Bx\|
\ \ \text{for every} \   x\in \mathcal H 
\ \text{and for  every} \  \ell\in\mathbb N.\end{equation}

Since $T\sim U$, there exists a quasiaffinity $Y:\mathcal K\to\mathcal H$ such that
$YU=TY$. It is easy to see from this intertwining relation and 
the conditions 
$U^{n_k}\to_k A$ and $T^{n_k}\to_k B$ in the weak operator topology that $YA=BY$.
We have $\mathcal H=\operatorname{clos}YA\mathcal K=\operatorname{clos}BY\mathcal K$, 
thus, 
$\operatorname{clos}B\mathcal H=\mathcal H$; from this equality and  \eqref{2.5} 
we conclude that
\begin{equation}\label{2.6} \|x\|\leq\|A^{-1}\|M^2\|T^\ell x\|
\ \ \text{for every} \   x\in \mathcal H \ \text{and for every} 
\  \ell\in\mathbb N.\end{equation}
From  \eqref{2.6} we conclude that 
 $T$ is left invertible. The  left invertibility of $T$ and 
  the relation $T\sim U$
imply  that  $T$ is invertible. From \eqref{2.6} we have that 
  $\|T^{-\ell}\|\leq \|A^{-1}\|M^2$
  for every $\ell\in\mathbb N$. By \cite{18}, $T$ is similar to a unitary. Since
     $T\sim U$, this unitary is unitarily equivalent to $U$.
 \end{proof}

Now we mention some known examples of unitary operators that satisfy 
the conditions of Theorem \ref{th2.5}.

\begin{example}\label{exa2.6}  \cite{1}, \cite{17}. Let the spectral measure of 
a unitary operator $U$ be pure atomic, 
that is, there exist countable families of Hilbert spaces 
$\{\mathcal H_j\}_j$ and of points $\{\lambda_j\}_j\subset\mathbb T$ such that 
$U\cong\oplus_j\lambda_jI_{\mathcal H_j}$.
Using the well-known diagonal process, one can find 
a sequence $\{n_k\}_k$ and a family of points
$\{\xi_j\}_j\subset\mathbb T$ such that 
$\lambda_j^{n_k}\to_k\xi_j$ for every $j$.
We set $A=\oplus_j\xi_jI_{\mathcal H_j}$. Then $U^{n_k}x\to_k Ax$ for every $x\in \oplus_j\mathcal H_j$. 
Consequently, $U^{n_k}\to_k A$ 
in the weak operator topology. Clearly,
$A$ is unitary, therefore, $\|A^{-1}\|=1$.
\end{example}

\begin{example}\label{exa2.7} 
 Let $E\subset\mathbb T$ be a set of absolute convergence, 
or, in other terminology,   of type N (see \cite{8}, \cite{11}),
that is, there exists a sequence $\{a_n\}_{n=1}^\infty$
such that $a_n\geq 0$ for every $n\geq 1$, $\sum_{n=1}^\infty a_n = \infty$,
and $\sum_{n=1}^\infty a_n|\operatorname{Im} \zeta^n|< \infty$ for every $\zeta\in E$.
Note that we do not suppose that $E$ is closed. Moreover, if  
a sequence $\{a_n\}_{n=1}^\infty$ as above is fixed and the set
$E=\{ \zeta\in\mathbb T : \ \sum_{n=1}^\infty a_n|\operatorname{Im} \zeta^n|< 
\infty\}$ is infinite,
then $\operatorname{clos}E=\mathbb T$ {\cite[VII.XI]{8}}.

Let  $\mu$ be a finite positive Borel
measure on $\mathbb T$ such that $\mu(\mathbb T) =\mu(E)$.
Then there exists a sequence $\{n_k\}_k$ such that  
$\zeta^{n_k}\to_k1$ a.e. with respect to $\mu$
 (see {\cite[\S VII.7]{8}}, {\cite[XIII.2.3]{11}}).
We sketch the proof briefly. Let $\{a_n\}_{n=1}^\infty$ be a sequence 
such that $a_n\geq 0$ for every $n\geq 1$, $\sum_{n=1}^\infty a_n = \infty$,
and $\sum_{n=1}^\infty a_n|\operatorname{Im} \zeta^n|< \infty$ for a.e.  $\zeta$
with respect to $\mu$. By the dominated convergence theorem,
$$\int_{\mathbb T}\Bigl(\sum_{n=1}^N a_n|\operatorname{Im} \zeta^n|\Bigr)\big/
\Bigl(\sum_{n=1}^N a_n\Bigr)d\mu(\zeta)\to_{N\to\infty}0,$$ therefore,
$\liminf_n\int_{\mathbb T}|\operatorname{Im} \zeta^n|d\mu(\zeta)=0$.
Thus, there exists a sequence $\{n_k\}_k$ such that  
$\sum_k\int_{\mathbb T}|\operatorname{Im} \zeta^{n_k}|d\mu(\zeta) < \infty$. 
Since
$$\sum_k\int_{\mathbb T}|\operatorname{Im} \zeta^{n_k}|d\mu(\zeta) =
\int_{\mathbb T}(\sum_k|\operatorname{Im} \zeta^{n_k}|)d\mu(\zeta),$$
we conclude that $\sum_k|\operatorname{Im} \zeta^{n_k}|<\infty$ for 
 a.e. $\zeta$ with respect to $\mu$, therefore, $\zeta^{n_k}\to_k1$ a.e. with respect to $\mu$.

Now, let $U$ be  a unitary operator, $\mu$ be its  scalar spectral measure, and let
there exist a set $E$ of absolute 
convergence such that   
 $\mu(\mathbb T) =\mu(E)$.
Then, as it was proved just above,
 there exists a sequence $\{n_k\}_k$ such that  
$\zeta^{n_k}\to_k1$ a.e. with respect to $\mu$, therefore,
 $U^{n_k}\to_k I$ in the weak operator topology.
\end{example}

\begin{lemma}\label{lem2.8} Let $\mu$ be a finite positive Borel  measure on
 $\mathbb T$ such that
$$\limsup_n|\hat\mu(n)|=\mu(\mathbb T),$$
 where $\hat\mu(n)$ are the Fourier coefficients of $\mu$.
Then there exist    a  sequence $\{n_k\}_k$ and a point  $\xi\in\mathbb T$ 
such that $\zeta^{n_k}\to_k\xi$ in the weak-star topology on  $L^\infty(\mu)$.
\end{lemma}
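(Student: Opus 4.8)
The plan is to extract from the hypothesis a subsequence along which $\zeta^{n_k}$, after a phase correction, is close to $1$ in mean, and then to upgrade this first to pointwise and finally to weak-star convergence. Write $m=\mu(\mathbb T)$; we may assume $m>0$, since otherwise $L^\infty(\mu)=\{0\}$ and there is nothing to prove. Use the normalization $\hat\mu(n)=\int_{\mathbb T}\zeta^n\,d\mu(\zeta)$ (the opposite convention is handled identically). By $\limsup_n|\hat\mu(n)|=m$ there is a sequence $\{n_k\}_k$ with $|\hat\mu(n_k)|\to_k m$; writing $\hat\mu(n_k)=r_k\xi_k$ with $r_k=|\hat\mu(n_k)|\ge 0$ and $\xi_k\in\mathbb T$, and using the compactness of $\mathbb T$, I would pass to a subsequence (kept with the same notation) so that $\xi_k\to_k\xi$ for some $\xi\in\mathbb T$.

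Next I would use that $r_k$ is the integral of a function whose real part is at most $1$:
\begin{equation*}
r_k=\overline{\xi_k}\,\hat\mu(n_k)=\int_{\mathbb T}\overline{\xi_k}\,\zeta^{n_k}\,d\mu(\zeta)=\int_{\mathbb T}\operatorname{Re}\!\bigl(\overline{\xi_k}\,\zeta^{n_k}\bigr)\,d\mu(\zeta),
\end{equation*}
where the last equality holds because $r_k\in\mathbb R$. Since $\operatorname{Re}(\overline{\xi_k}\zeta^{n_k})\le 1$ pointwise, the nonnegative functions $1-\operatorname{Re}(\overline{\xi_k}\zeta^{n_k})$ have integrals $m-r_k\to_k0$. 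Combining this with the identity $|\overline{\xi_k}\zeta^{n_k}-1|^2=2\bigl(1-\operatorname{Re}(\overline{\xi_k}\zeta^{n_k})\bigr)$ gives $\overline{\xi_k}\zeta^{n_k}\to_k1$ in $L^2(\mu)$, hence in $L^1(\mu)$; passing to a further subsequence, $\overline{\xi_k}\zeta^{n_k}\to_k1$ $\mu$-a.e., and since $\xi_k\to_k\xi$ this yields $\zeta^{n_k}\to_k\xi$ $\mu$-a.e.

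Finally, since $|\zeta^{n_k}|=1$ for all $k$ and $\mu$ is finite, for each $g\in L^1(\mu)$ the dominated convergence theorem gives $\int_{\mathbb T}\zeta^{n_k}g\,d\mu\to_k\int_{\mathbb T}\xi g\,d\mu$, i.e. $\zeta^{n_k}\to_k\xi$ in the weak-star topology of $L^\infty(\mu)=(L^1(\mu))^{\ast}$, which is the assertion. I do not expect a real obstacle: the argument is soft, and the only point that needs a little care is to pass to an a.e.-convergent subsequence before invoking dominated convergence, rather than attempting to read off weak-star convergence directly from the $L^2$ estimate (which would not suffice, as $L^2$-convergence does not pair with arbitrary $L^1$ functions).
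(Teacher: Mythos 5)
Your proof is correct. It does, however, take a somewhat different route from the paper's. The paper first extracts, by weak-star compactness of the unit ball of $L^\infty(\mu)$, a subsequence with $\zeta^{n_k}\to\varphi$ weak-star, and then identifies $\varphi$ by an extremality argument: $\bigl|\int\varphi\,d\mu\bigr|=\mu(\mathbb T)$ together with $\|\varphi\|_\infty\le1$ forces $|\varphi|=1$ a.e., and then $\int\operatorname{Re}(\overline\xi\varphi)\,d\mu=\mu(\mathbb T)$ with $\xi=\int\varphi\,d\mu/\mu(\mathbb T)$ forces $\varphi=\xi$ a.e. You instead fix the candidate limit in advance as the limit of the phases $\xi_k$ of $\hat\mu(n_k)$ (using only compactness of $\mathbb T$), and make the same extremal principle quantitative via the identity $|w-1|^2=2\bigl(1-\operatorname{Re}w\bigr)$ for $|w|=1$, obtaining $\overline{\xi_k}\zeta^{n_k}\to1$ in $L^2(\mu)$, hence a.e. along a further subsequence, and you finish with dominated convergence. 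Your version avoids invoking weak-star sequential compactness of $L^\infty(\mu)$ and actually yields a stronger conclusion (norm and a.e. convergence along the chosen subsequence, not merely weak-star), while the paper's version is marginally shorter because the compactness step produces the limit directly in the topology required by the statement. Your closing caution is also well placed: $L^2$-convergence alone does not pair against arbitrary $L^1(\mu)$ functions, so the passage to an a.e. convergent subsequence (harmless here, since only some sequence is required) before applying dominated convergence is exactly the right fix.
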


\begin{proof} Without loss of generality, we may assume that $\mu(\mathbb T)=1$.
Let $\{\ell_k\}_k$ be a sequence such that 
$\limsup_n|\hat\mu(n)|=\lim_k|\hat\mu(\ell_k)|$.
There exist a subsequence  $\{n_k\}_k$ of $\{\ell_k\}_k$  and 
 a function $\varphi\in L^\infty(\mu)$
such that $\zeta^{n_k}\to_k\varphi$ 
in the  weak-star topology on  $L^\infty(\mu)$. Since
$\|\zeta^{n_k}\|_\infty=1$, we have  $\|\varphi\|_\infty\leq 1$.
Furthermore,
$$1=\lim_k|\hat\mu(n_k)|=\lim_k\Bigl|\int\zeta^{n_k}d\mu(\zeta)\Bigr|=
\Bigl|\int\varphi d\mu\Bigr|
\leq\int|\varphi| d\mu\leq 1.$$
Thus, $\int|\varphi| d\mu= 1$, therefore,  $|\varphi(\zeta)|=1$ 
for  a.e. $\zeta$ with respect to $\mu$. We set $\xi=\int\varphi d\mu$,
then 
$1=\int\overline\xi\varphi d\mu=
\int\operatorname{Re}(\overline\xi\varphi) d\mu$.
Since $\operatorname{Re}(\overline\xi\varphi)\leq 1$, we conclude that 
 $\varphi(\zeta)=\xi$ 
for  a.e. $\zeta$ with respect to $\mu$.  \end{proof}

\begin{corollary}\label{cor2.9} If the scalar spectral measure of a unitary operator $U$ 
satisfies the conditions of Lemma \ref{lem2.8}, then there exist a sequence
 $\{n_k\}_k$ and  a point $\xi\in\mathbb T$ 
 such that $U^{n_k}\to_k \xi I$ in the weak operator topology. \end{corollary}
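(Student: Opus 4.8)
The plan is to derive the Corollary directly from Lemma \ref{lem2.8} together with the passage from weak-star convergence in $L^\infty(\mu)$ to weak operator convergence of the powers of $U$, which is recalled in the Introduction. Let $\mu$ denote the scalar spectral measure of $U$; by hypothesis it satisfies $\limsup_n|\hat\mu(n)|=\mu(\mathbb{T})$, so Lemma \ref{lem2.8} applies.

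First I would apply Lemma \ref{lem2.8} to $\mu$ to obtain a sequence $\{n_k\}_k$ and a point $\xi\in\mathbb{T}$ such that $\zeta^{n_k}\to_k\xi$ in the weak-star topology on $L^\infty(\mu)$, where $\xi$ is understood as the constant function on $\mathbb{T}$ taking that value. Next I would invoke the fact stated in the Introduction: if $\zeta^{n_k}\to_k\varphi$ weak-star in $L^\infty(\mu)$ for some $\varphi\in L^\infty(\mu)$, then $U^{n_k}\to_k\varphi(U)$ in the weak operator topology. Taking $\varphi$ to be the constant function $\xi$, one has $\varphi(U)=\xi I$ by the elementary properties of the $L^\infty(\mu)$-functional calculus of $U$ (the functional calculus is a unital algebra homomorphism, so it sends the constant function $\xi$ to $\xi I$). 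Hence $U^{n_k}\to_k\xi I$ in the weak operator topology, which is exactly the assertion.

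There is essentially no obstacle here: the Corollary is a formal consequence of Lemma \ref{lem2.8} and the already-recorded link between weak-star convergence on $L^\infty(\mu)$ and weak operator convergence of $U^{n_k}$. The only point worth making explicit is that the constant function is carried by the functional calculus to a scalar multiple of the identity, so that the limit operator $\varphi(U)$ is precisely $\xi I$ rather than a more complicated operator; everything else is a direct citation.
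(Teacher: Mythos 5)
Your proof is correct and follows exactly the route the paper intends: the corollary is stated without a separate proof precisely because it is the immediate combination of Lemma \ref{lem2.8} with the fact recorded in the Introduction that weak-star convergence $\zeta^{n_k}\to\varphi$ in $L^\infty(\mu)$ yields $U^{n_k}\to\varphi(U)$ in the weak operator topology, with $\varphi\equiv\xi$ giving $\varphi(U)=\xi I$. Nothing further is needed.
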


\begin{example}\label{exa2.10}{\cite[Theorem 2]{4}}. There exist  unitary operators  $U$ 
such that    
$U^{n_k}\to_k\kappa I$ in the weak operator topology for some sequences  
$\{n_k\}_k$ 
and some $\kappa\in\mathbb C$, $0<|\kappa|<1$.
\end{example}

The following theorem is the main result of the paper.

\begin{theorem}\label{th2.11} Suppose that $M$, $C$ are positive constants,
$\{\mathcal K_j\}_j$ is  no more than countable family of Hilbert spaces,
and $U_j:\mathcal K_j\to\mathcal K_j$ are unitary operators. 
We set $\mathcal K=\oplus_j\mathcal K_j$ and
$U=\oplus_jU_j$. Suppose that

\noindent $\text{\rm(i)}$  every operator $U_j$ has the following property:
if $R$ is an operator such that $\sup_{n\geq 0}\|R^n\|\leq M$ and
$R\sim U_j$, then $\sup_{n\leq 0}\|R^n\|\leq C$.

\noindent $\text{\rm(ii)}$   $\mathcal K_j\in\operatorname{Hyplat}U$ for every $j$.

 Let $T$ be an operator such that  $\sup_{n\geq 0}\|T^n\|\leq M$
and $T\sim U$. Then $T\approx U$ and  $\sup_{n\leq 0}\|T^n\|\leq M^2C^3$.
\end{theorem}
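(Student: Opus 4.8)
The plan is to reduce the orthogonal-sum statement to the single-block hypothesis (i) by a bootstrap argument, using the hyperinvariance assumption (ii) to split the quasisimilarity $T\sim U$ into compatible pieces on the summands. Write $T:\mathcal H\to\mathcal H$, fix quasiaffinities $X:\mathcal H\to\mathcal K$ with $XT=UX$ and $Y:\mathcal K\to\mathcal H$ with $YU=TY$. For each $j$ let $P_j$ be the orthogonal projection of $\mathcal K$ onto $\mathcal K_j$; since $\mathcal K_j\in\operatorname{Hyplat}U$, $P_j$ commutes with everything commuting with $U$, so $P_jX Y$ and $YX P_j$ behave well. First I would produce, for each $j$, a power bounded operator $T_j$ on a subspace $\mathcal H_j$ of $\mathcal H$ with $T_j\sim U_j$ and $\sup_{n\ge0}\|T_j^n\|\le M$; then hypothesis (i) gives $\sup_{n\le0}\|T_j^n\|\le C$, i.e. $T_j$ is invertible with $\|T_j^{-n}\|\le C$. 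The natural candidate is $\mathcal H_j=\operatorname{clos}Y\mathcal K_j$: since $\mathcal K_j\in\operatorname{Lat}U$ and $YU=TY$, the space $\mathcal H_j$ is invariant for $T$; and because $U$ is singular (Lemma \ref{lem2.2} shows any $U$ with the relevant property is singular, but here I should argue directly — $U$ is quasisimilar to a power bounded operator, and one checks $U$ has no absolutely continuous part, hence $\mathcal K_j$ is in fact reducing), the restriction $U|_{\mathcal K_j}=U_j$ and $T_j:=T|_{\mathcal H_j}$ satisfy $T_j\prec U_j$ via $X|_{\mathcal H_j}$ landing in $\mathcal K_j$, and $U_j\prec T_j$ via $Y|_{\mathcal K_j}$; so $T_j\sim U_j$ and $\sup_{n\ge0}\|T_j^n\|\le M$.

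Applying (i) to each $T_j$ yields invertibility of $T_j$ with $\|T_j^{-k}\|\le C$ for all $k\ge1$; in particular $\sup_{n\in\mathbb Z}\|T_j^n\|\le\max(M,C)$, so by the Sz.-Nagy/Gelfand theorem $T_j\approx U_j$ and (using $T_j\sim U_j$) the implementing unitary is $U_j$. Next I would reassemble: the subspaces $\{\mathcal H_j\}_j$ need not be orthogonal, so the goal is to show $\mathcal H=\bigvee_j\mathcal H_j$ and that the angles between partial sums are bounded below uniformly, which would exhibit $T$ as similar to $\bigoplus_j T_j\approx\bigoplus_j U_j=U$. For the spanning: $\operatorname{clos}Y\mathcal K=\mathcal H$ since $Y$ is a quasiaffinity and $\bigvee_j\mathcal K_j=\mathcal K$, and $Y\mathcal K_j\subset\mathcal H_j$, so $\mathcal H=\bigvee_j\mathcal H_j$. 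The uniform-angle estimate is where the constants enter: using \eqref{2.3} for the isometric asymptote, $\|Xx\|$ is comparable (within $M$) to the asymptotic norm of $T^nx$; combined with $\sup_{n\le0}\|T_j^n\|\le C$ on each block and the fact that $X$ maps $\mathcal H_j$ into $\mathcal K_j$ with $\mathcal K_j$ mutually orthogonal, one gets that for $x=\sum_j x_j$ with $x_j\in\mathcal H_j$ (finite sum), $\sum_j\|x_j\|^2$ is controlled by $\|x\|^2$ times a constant of the form $M^2C^2$, while the reverse is automatic. This is essentially the argument behind Theorem \ref{th2.5} run blockwise, and tracking the constants should give $\sup_{n\le0}\|T^n\|\le M^2C^3$.

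The main obstacle I anticipate is the uniform lower bound on angles between the subspaces $\mathcal H_j$: individually each $T_j$ is invertible with controlled inverse, but a priori the $\mathcal H_j$ could crowd together so that $\bigoplus_j T_j$ is only formally defined and not boundedly equivalent to $T$. Resolving this is exactly why hypothesis (ii) — the $\mathcal K_j$ being \emph{hyperinvariant}, not merely invariant — is needed: it lets one transport the orthogonality of the $\mathcal K_j$ back through $X$ and $Y$ in a way compatible with the dynamics, pinning down $\|P_{\mathcal H_j}\|$-type quantities uniformly in $j$. Concretely I would run the Theorem \ref{th2.5}-style computation: from $T^{n_k}$ converging weakly along suitable sequences on each block to something invertible (the block analogue of $A$), derive estimates $\|x\|\le M^2C^2\|T^\ell x\|$ uniformly over blocks and $\ell\ge1$, then pass to the closure of $B\mathcal H=\mathcal H$ to get $\|T^{-\ell}\|\le M^2C^3$; the extra factor of $C$ compared to Theorem \ref{th2.5}'s $\|A^{-1}\|M^2$ comes from not having a single limit operator $A$ but only the blockwise ones glued together, costing one more application of the single-block bound. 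Once $\sup_{n\in\mathbb Z}\|T^n\|<\infty$ is in hand, $T$ is similar to a unitary, and quasisimilarity to $U$ forces that unitary to be $U$, completing the proof.
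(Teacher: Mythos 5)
Your reduction to the blocks is essentially sound (two small repairs: $\mathcal K_j$ is reducing simply because $U^{-1}\in\{U\}'$ and $\mathcal K_j\in\operatorname{Hyplat}U$, so no singularity detour is needed — and your sketched argument that $U$ has no absolutely continuous part is not valid anyway; also the density of $X\mathcal H_j$ in $\mathcal K_j$ needs the remark that $XY$ lies in the commutant of $U$, so $\operatorname{clos}XY\mathcal K_j=\mathcal K_j$). The lower estimate $\sum_j\|x_j\|^2\le M^2C^2\bigl\|\sum_j x_j\bigr\|^2$ is exactly the paper's \eqref{2.9}, obtained the same way from \eqref{2.3}, the bound $\|T_j^{-n}\|\le C$, and the orthogonality of the $\mathcal K_j$. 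But the genuine gap is the reverse inequality $\bigl\|\sum_j x_j\bigr\|^2\le M^2C^2\sum_j\|x_j\|^2$, i.e.\ the uniform angle bound: it is emphatically not ``automatic'' (Cauchy--Schwarz gives a constant growing with the number of blocks, and the family is countable), and your proposed fix does not work. There is no single sequence $\{n_k\}$ along which $U^{n_k}$ converges to an invertible operator — that is precisely the situation Theorem \ref{th2.11} is designed for (in Theorem \ref{th3.5} each block $E_{K\ell}$ has its own sequence, and no diagonal argument is available for continuous spectral measure), so a ``blockwise $A$ glued together'' cannot be inserted into the Theorem \ref{th2.5} computation; and the estimate $\|x\|\le M^2C^2\|T^\ell x\|$ uniformly over finite sums of blocks is equivalent to the very inequality you are trying to prove, so the sketch is circular at the decisive point.

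What the paper does instead, and what is missing from your proposal, is a duality argument. It takes the blocks as $\mathcal M_j=q^{-1}\mathcal K_j$ via K\'erchy's lattice isomorphism between $\operatorname{Hyplat}_1T$ and $\operatorname{Hyplat}U$ (rather than $\operatorname{clos}Y\mathcal K_j$), repeats the same construction for the adjoints $T^\ast\sim U^\ast$ with the canonical intertwining map $X_\ast$ to obtain a second family $\{\mathcal M'_j\}$ satisfying the same lower estimate \eqref{2.10}, proves the biorthogonality $\mathcal M_j\perp\mathcal M'_k$ for $j\neq k$ using the identity $q^{-1}\mathcal E=\operatorname{clos}X_\ast^\ast\mathcal E$, and then converts the lower bound for the dual family into the upper bound \eqref{2.12} for $\{\mathcal M_j\}$ by the standard duality for unconditional decompositions (the operator $(PZ)^\ast$ in the paper, or {\cite[VI.4]{15}}, {\cite[C.3.1]{16}}). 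Some argument of this kind must be supplied; once \eqref{2.12} is available, the rest of your outline (invertibility of $\sum_jx_j\mapsto\oplus_jx_j$ with both norms at most $MC$, hence $T\approx\oplus_jT_j\approx U$ and $\sup_{n\le0}\|T^n\|\le M^2C^3$) coincides with the paper's conclusion.
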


\begin{proof} Since  $T\sim U$, we have that $U$ is unitarily equivalent to 
the isometric asymptote of $T$.
Denote by $\mathcal H$ the space on which $T$ acts, and by
 $X:\mathcal H\to\mathcal K$ the canonical intertwining mapping 
for $T$ and $U$ (see \cite{9}).
Denote by 
\begin{equation}\label{2.7} q: \operatorname{Hyplat}_1T\to\operatorname{Hyplat}U, 
\ \ \ q\mathcal M=\operatorname{clos}X\mathcal M,
\ \ \ \mathcal M\in\operatorname{Hyplat}_1T, \end{equation}
 the lattice isomorphism between  $\operatorname{Hyplat}_1T$ and 
 $\operatorname{Hyplat}U$
(see \cite{9}). We set $\mathcal M_j=q^{-1}\mathcal K_j$ and $T_j=T|_{\mathcal M_j}$ for every $j$.
By  \cite{9}, $T_j\sim U_j$, therefore, by assumption (i),
$T_j\approx U_j$ and  
\begin{equation}\label{2.8} \sup_{n\leq 0}\|T_j^n\|\leq C \ \ \text{ for every} \  j.   \end{equation}
It is obvious from the above relations that  $\oplus_jT_j\approx U$.
We will show that $T\approx\oplus_jT_j$.   

From the estimate on $\|T^n\|$ for $n\geq 0$, we have that $\|X\|\leq M$.
From assumption (i)  and the properties of $X$ (see \eqref{2.3} and \eqref{2.8}) we have that
$\|Xx\|\geq\frac{1}{C}\|x\|$ for every $x\in\mathcal M_j$ and every $j$.
Let $\{x_j\}_j$ be a {\it finite} family such that $x_j\in\mathcal M_j$. Then
$$\frac{1}{C^2}\sum_j\|x_j\|^2\leq\sum_j\|Xx_j\|^2=
\Bigl\|X\bigl(\sum_jx_j\bigr)\Bigr\|^2
\leq M^2\Bigl\|\sum_jx_j\Bigr\|^2.$$
We obtain that 
\begin{equation}\label{2.9} \begin{gathered}\frac{1}{M^2C^2}\sum_j\|x_j\|^2\leq\Bigl\|\sum_jx_j\Bigl\|^2  \\
\ \ \text{for any  finite family} \ \ 
\{x_j\}_j  \ \ \text{such that} \ \ x_j\in\mathcal M_j. \end{gathered}\end{equation}

Clearly, $U^\ast$ and $T^\ast$ satisfy the conditions of the theorem.
Denote by  $X_\ast:\mathcal H\to\mathcal K$ the canonical intertwining mapping 
for $T^\ast$ and $U^\ast$ and by $q_\ast$  a lattice isomorphism
 between  $\operatorname{Hyplat}_1T^\ast$ and
 $\operatorname{Hyplat}U^\ast=\operatorname{Hyplat}U$ 
 defined analogously to \eqref{2.7}.
We set $\mathcal M'_j=q_\ast^{-1}\mathcal K_j$  for every $j$.
Analogously to \eqref{2.9} we obtain that 
\begin{equation}\label{2.10} \begin{gathered} \frac{1}{M^2C^2}\sum_j\|y_j\|^2\leq\Bigl\|\sum_jy_j\Bigr\|^2 \\
\ \ \text{for any  finite family} \ \ 
\{y_j\}_j  \ \ \text{such that} \ \ y_j\in\mathcal M'_j.  \end{gathered}\end{equation}

Now, we will  show that the families $\{\mathcal M_j\}_j$ and $\{\mathcal M'_j\}_j$ are 
biorthogonal, that is, 
\begin{equation}\label{2.11} \mathcal M_j\perp\mathcal M'_k, \ \ \text{if} \ \ j\neq k. \end{equation}
Let $j\neq k$, and  let $x\in\mathcal M_j$ and $y\in\mathcal M'_k$.
By  \cite{9}, $q^{-1}\mathcal E= \operatorname{clos}X_\ast^\ast\mathcal E$
for every $\mathcal E\in\operatorname{Hyplat}U$.
Thus, $x=\lim_\ell X_\ast^\ast g_\ell$, where $g_\ell\in\mathcal K_j$, and
$(x,y)=\lim_\ell (X_\ast^\ast g_\ell,y)=\lim_\ell ( g_\ell,X_\ast y)=0$,
 because $X_\ast y\in \mathcal K_k$.

From  \eqref{2.10}, \eqref{2.11} and {\cite[VI.4]{15}},  {\cite[C.3.1]{16}} we conclude that 
\begin{equation}\label{2.12}  \begin{gathered}\Bigl\|\sum_jx_j\Bigr\|^2\leq M^2C^2\sum_j\|x_j\|^2 \\ \ \ 
\text{for any finite family} \ \ 
\{x_j\}_j  \ \ \text{such that} \ \ x_j\in\mathcal M_j.  \end{gathered}\end{equation}
For convenience, we give a  proof of \eqref{2.12} below.
By \eqref{2.10}, the mapping
 $$Z:\mathcal H\to \oplus_j\mathcal M'_j, \ \ \ Z\bigl(\sum_jy_j\bigr)=\oplus_jy_j, 
 \ \ \text{where} \ \
y_j\in\mathcal M'_j,$$
is  a linear bounded transformation and $\|Z\|\leq MC$.
Denote by $P_j:\mathcal M'_j\to\mathcal M_j$ the restriction on $\mathcal M'_j$ 
of the orthogonal projection
on $\mathcal M_j$. We set $P=\oplus_jP_j$, then, evidently, $\|P\|\leq 1$.
Let us regard the linear bounded transformation
$$(PZ)^\ast: \oplus_j\mathcal M_j\to\mathcal H.$$
Clearly, 
\begin{equation}\label{2.13} \|(PZ)^\ast\|=\|PZ\|\leq MC. \end{equation}
Using the biorthogonality of the families $\{\mathcal M_j\}_j$ and 
$\{\mathcal M'_j\}_j$,
it is easy to see that $(PZ)^\ast$ acts by the formula
$$(PZ)^\ast(\oplus_jx_j)=\sum_jx_j, \ \ \text{where} \ \
x_j\in\mathcal M_j.$$
Now, \eqref{2.12} follows from \eqref{2.13}.

From \eqref{2.9} and \eqref{2.12} we conclude that the mapping
$$Y:\mathcal H\to \oplus_j\mathcal M_j, \ \ \ Y\bigl(\sum_jx_j\bigr)=\oplus_jx_j, 
\ \ \text{where} \ \
x_j\in\mathcal M_j,$$
is a  linear bounded invertible transformation, $\|Y\|\leq MC$, 
$\|Y^{-1}\|\leq MC$, and, evidently,
$YT=(\oplus_jT_j)Y$.
Therefore,   $T\approx\oplus_jT_j$ and 
$$\|T^n\|\leq\|Y^{-1}\|\, \|(\oplus_jT_j)^n\| \|Y\|\leq M^2C^3 \ \text{ for all } 
\ n\leq 0.$$ \end{proof}

The following theorem can be regarded as a generalization of Lemma \ref{lem1.1}.
It is evident from the definition of the Wiener algebra $A(\mathbb T)$ of
functions on $\mathbb T$ that $A(\mathbb T)$ is isometrically isomorphic 
to the space $\ell^1$;
therefore, the dual space $A(\mathbb T)^\ast$ of $A(\mathbb T)$ is 
  isometrically isomorphic to $\ell^\infty$. The sequences from
$\ell^\infty$ regarded as elements of $A(\mathbb T)^\ast$ are called
 {\it pseudomeasures} \cite{2}, \cite{6}, \cite{7}, \cite{8}, \cite{11}.
The inclusion $A(\mathbb T)\subset C(\mathbb T)$
implies that $C(\mathbb T)^\ast\subset A(\mathbb T)^\ast$, that is,
if $\mu$ is a {\it complex} Borel measure on $\mathbb T$, then 
$\mu\in A(\mathbb T)^\ast$, and for $f=\sum_{n\in\mathbb Z}\hat f(n)\zeta^n\in A(\mathbb T)$
we have 
$$\langle f,\mu\rangle=\int_{\mathbb T}f(\zeta)d \mu(\zeta)=
\int_{\mathbb T}\big(\sum_{n\in\mathbb Z}\hat f(n)\zeta^n\big)d \mu(\zeta)
=\sum_{n\in\mathbb Z}\hat f(n)\hat \mu(-n),$$
where $\hat\mu(n)=\int_{\mathbb T}\zeta^{-n}d\mu(\zeta)$. In other words,
a  complex Borel measure on $\mathbb T$
is a pseudomeasure. But there exist pseudomeasures that are not 
 measures, that is, there exist sequences from $\ell^\infty$ that are not 
sequences of the  Fourier coefficients of any complex Borel measure  on $\mathbb T$
 \cite{2}, \cite{6},  \cite{8}, \cite{11}.

The duality between functions $f=\sum_{n\in\mathbb Z}\hat f(n)\zeta^n\in A(\mathbb T)$
and pseudomeasures $\mu=\{\hat \mu(n)\}_{n\in\mathbb Z}\in\ell^\infty$ is
given by the formula $\langle f,\mu\rangle=\sum_{n\in\mathbb Z}\hat f(n)\hat \mu(-n)$.
The following description of  $AA^+$-sets is a consequence of this duality.
A closed set $E\subset\mathbb T$ is an $AA^+$-set if and only if
there exists a constant $K(E)$ such that 
\begin{equation}\label{2.14}  \sup_{n\in\mathbb Z}|\hat\mu(n)|\leq K(E)\limsup_{n\to\infty}|\hat\mu(n)| 
 \end{equation}
for every  pseudomeasure $\mu$ such that 
$\langle f,\mu\rangle=0$ for every $f\in I(E)$ (see  \cite{6}, \cite{7}). 
Theorem \ref{th2.12} shows that in Lemma \ref{lem1.1}  condition \eqref{2.14} on
 {\it pseudomeasures}
(which is contained in Lemma \ref{lem1.1} in a  nonobvious form) can be replaced 
by the condition on {\it  positive measures}. Note that for  positive measures
 on some set $E$  $\sup_{n\in\mathbb Z}|\hat\mu(n)|=\mu(E)$.

\begin{theorem}\label{th2.12} Let $K>0$ be a constant, and let  $E\subset\mathbb T$ 
be a Borel set such that $\mu(E)\leq K\limsup_{n\to \infty}|\hat\mu(n)|$ 
for every  positive finite Borel measure $\mu$ on 
$\mathbb T$ such that
$\mu(E)=\mu(\mathbb T)$. 
Let $U$ be a unitary operator, let $\nu$ be the scalar spectral measure of $U$,
and let $\nu(E)=\nu(\mathbb T)$.
Suppose that $T$ is a power bounded operator such that  $T\sim U$.
Then $T\approx U$ and
  $$\sup_{n\leq 0}\|T^n\|\leq K^3(\sup_{n\geq 0}\|T^n\|)^8.$$
\end{theorem}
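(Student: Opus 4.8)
The plan is to reduce the assertion to Theorem~\ref{th2.11}. Put $M=\sup_{n\ge0}\|T^n\|$. For a Borel set $F\subset\mathbb T$ denote by $\mathcal K_F$ the spectral subspace of $U$ corresponding to $F$ and by $U_F=U|_{\mathcal K_F}$ the corresponding restriction; then $\mathcal K_F\in\operatorname{Hyplat}U$, since spectral subspaces of a unitary operator are hyperinvariant, and the scalar spectral measure of $U_F$ is $\nu|_F$. I shall build a finite or countable partition $E=\bigcup_jF^{(j)}$ into pairwise disjoint Borel sets (modulo $\nu$-null sets) such that each $U_{F^{(j)}}$ satisfies hypothesis~(i) of Theorem~\ref{th2.11} with the same $M$ and with $C=KM^2$. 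Condition~(ii) is then automatic, $\mathcal K=\oplus_j\mathcal K_{F^{(j)}}$ and $U=\oplus_jU_{F^{(j)}}$, and since $T\sim U$ with $\sup_{n\ge0}\|T^n\|=M$, Theorem~\ref{th2.11} yields $T\approx U$ and $\sup_{n\le0}\|T^n\|\le M^2C^3=K^3M^8$, which is the claim.

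The key step is the following: \emph{for every Borel set $F\subset E$ with $\nu(F)>0$ there exist a Borel set $F'\subset F$ with $\nu(F')>0$, a sequence $\{n_k\}_k$ and a function $\varphi\in L^\infty(\nu|_{F'})$ with $\operatorname{ess\,inf}_{F'}|\varphi|\ge1/K$ such that $\zeta^{n_k}\to_k\varphi$ in the weak-star topology of $L^\infty(\nu|_{F'})$.} To prove it, apply the hypothesis of the theorem to $\nu|_F$, which is a positive finite Borel measure with $(\nu|_F)(E)=(\nu|_F)(\mathbb T)$: it gives $\limsup_n|\widehat{\nu|_F}(n)|\ge\nu(F)/K$. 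Choose $\{n_k\}_k$ with $|\widehat{\nu|_F}(n_k)|\to\limsup_n|\widehat{\nu|_F}(n)|$ and, passing to a subsequence, $\zeta^{n_k}\to_k\varphi$ weak-star in $L^\infty(\nu|_F)$, so $\|\varphi\|_\infty\le1$. Pairing with the constant function $1$ gives $\int_F\zeta^{n_k}\,d\nu\to\int_F\varphi\,d\nu$, hence $\int_F|\varphi|\,d\nu\ge\bigl|\int_F\varphi\,d\nu\bigr|=\lim_k\bigl|\int_F\zeta^{n_k}\,d\nu\bigr|=\lim_k|\widehat{\nu|_F}(n_k)|\ge\nu(F)/K$. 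Put $F'=\{\zeta\in F:\ |\varphi(\zeta)|\ge1/K\}$. Then $\nu(F')>0$: otherwise $|\varphi|<1/K$ $\nu|_F$-almost everywhere, and since $\nu(F)>0$ this would force $\int_F|\varphi|\,d\nu<\nu(F)/K$, contradicting the inequality just obtained. Since weak-star convergence in $L^\infty(\nu|_F)$ restricts to weak-star convergence in $L^\infty(\nu|_{F'})$ and $\operatorname{ess\,inf}_{F'}|\varphi|\ge1/K$, the key step is proved. It follows that $U_{F'}^{n_k}\to_k\varphi(U_{F'})=:A$ in the weak operator topology, $A$ is invertible, and $\|A^{-1}\|=1/\operatorname{ess\,inf}_{F'}|\varphi|\le K$. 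By Theorem~\ref{th2.5}, if $R$ is an operator with $R\sim U_{F'}$ and $\sup_{n\ge0}\|R^n\|\le M$, then $\sup_{n\le0}\|R^n\|\le\|A^{-1}\|M^2\le KM^2$; thus $U_{F'}$ satisfies hypothesis~(i) of Theorem~\ref{th2.11} with $C=KM^2$.

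It remains to exhaust $E$. Let $\mathcal G$ be the family of Borel sets $F\subset E$ for which there are a sequence $\{n_k\}_k$ and $\varphi\in L^\infty(\nu|_F)$ with $\operatorname{ess\,inf}_F|\varphi|\ge1/K$ and $\zeta^{n_k}\to_k\varphi$ weak-star in $L^\infty(\nu|_F)$. By the previous paragraph, each $F\in\mathcal G$ with $\nu(F)>0$ has $U_F$ satisfying hypothesis~(i) of Theorem~\ref{th2.11} with $C=KM^2$; moreover $\mathcal G$ is stable under passing to Borel subsets (restrict the data, noting that the essential infimum of $|\varphi|$ can only grow on a smaller set), and, again by the previous paragraph, every Borel subset of $E$ of positive $\nu$-measure contains a member of $\mathcal G$ of positive measure. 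A standard exhaustion then produces pairwise disjoint $F^{(1)},F^{(2)},\dots\in\mathcal G$ with $\sum_j\nu(F^{(j)})\le\nu(\mathbb T)<\infty$ and $\nu(E\setminus\bigcup_jF^{(j)})=0$: at each stage pick $F^{(j+1)}\in\mathcal G$ inside the current leftover with $\nu(F^{(j+1)})$ at least half the supremum of the $\nu$-measures of members of $\mathcal G$ contained in that leftover; if the final leftover had positive measure it would contain a member of $\mathcal G$ of positive measure, contradicting $\nu(F^{(j)})\to0$. As $\nu(E)=\nu(\mathbb T)$, the spectral projections $\mathbb E(F^{(j)})$ sum to $I$, so $\mathcal K=\oplus_j\mathcal K_{F^{(j)}}$, $U=\oplus_jU_{F^{(j)}}$, every $\mathcal K_{F^{(j)}}\in\operatorname{Hyplat}U$, and every $U_{F^{(j)}}$ satisfies hypothesis~(i) with this $M$ and $C=KM^2$. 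Theorem~\ref{th2.11} gives $T\approx U$ and $\sup_{n\le0}\|T^n\|\le M^2C^3=K^3M^8$. (Incidentally, taking in the hypothesis the Lebesgue measure restricted to $E$ and using the Riemann--Lebesgue lemma shows that $E$ has Lebesgue measure zero, so that $U$ is singular; this is not used above.)

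The point that I expect to require the most care is passing from the global bound $\limsup_n|\widehat{\nu|_F}(n)|\ge\nu(F)/K$ on the restricted measure to a piece $F'$ carrying a weak-star limit $\varphi$ that is bounded away from $0$ \emph{by a constant not depending on the piece}: there is no reason a single sequence $\{n_k\}$ serves all pieces, and one cannot expect $|\varphi|\equiv1$ on a piece (which would give $\|A^{-1}\|=1$). The elementary observation converting the integral estimate $\int_F|\varphi|\,d\nu\ge\nu(F)/K$ into the pointwise estimate $|\varphi|\ge1/K$ on a set of positive measure is exactly what makes the constant come out as $K$ (hence $C=KM^2$ and the final bound $K^3M^8$); and one must check that the exhaustion really covers $E$ modulo $\nu$-null sets, which is the key step applied to the leftover.
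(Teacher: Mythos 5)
Your proof is correct and follows essentially the same route as the paper: decompose $E$ into countably many Borel pieces on each of which some subsequence $\zeta^{n_k}$ converges weak-star to a function bounded away from zero, apply Theorem \ref{th2.5} to each spectral restriction, and assemble via Theorem \ref{th2.11}. The only difference is that you spell out the decomposition step the paper dispatches with Zorn's lemma — via the integral-to-pointwise argument giving $|\varphi|\ge 1/K$ on a set of positive measure and a greedy exhaustion — which also produces the bound $K^3(\sup_{n\ge 0}\|T^n\|)^8$ directly, without the paper's limiting step $c\to 1/K$.
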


 \begin{proof}  Let $E$ be a set having the property described in the theorem, 
 let $\nu$ be a positive finite Borel measure on $\mathbb T$  such that 
$\nu(E)=\nu(\mathbb T)$ , and let $0<c<\frac{1}{K}$.
Applying the Zorn lemma, it is easy to see that 
there exists no more than countable family $\{E_j\}_j$ of 
Borel sets  such that $E=\cup_jE_j$ and for every $j$ there exist 
a sequence $\{n_{jk}\}_k$ and a function
$\varphi_j\in L^\infty(E_j, \nu)$ 
such that $\zeta^{n_{jk}}\to_k \varphi_j$ in the weak-star
topology on $ L^\infty(E_j, \nu)$ and 
$|\varphi_j|\geq c$ a.e. on $E_j$ with respect to $\nu$.

Now let $\nu$ be the scalar spectral measure of a unitary operator $U$,
and let $0<c<\frac{1}{K}$ be fixed. Let $\{E_j\}_j$ be the family of Borel sets
constructed just above.
We set  $E'_1=E_1$ and 
$E' _j=E _j\setminus E _{j-1}$ for $j>1$; of course,  the sets $\{E'_j\}_j$ 
are mutually disjoint.
For every $j$, denote by $U_j$ the restriction of $U$ on its spectral 
subspace corresponding to the set $E'_j$.
Clearly, $U_j$ and $\varphi_j(U_j)$ satisfy the conditions of Theorem \ref{th2.5}, 
and $\|(\varphi_j(U_j))^{-1}\|\leq\frac{1}{c}$ for every $j$. Therefore, 
$U=\oplus_j U_j$ satisfies the conditions of Theorem \ref{th2.11}.
We set $M=\sup_{n\geq 0}\|T^n\|$; by Theorems  \ref{th2.5} and \ref{th2.11}, 
 $T\approx U$ and  
$$\sup_{n\leq 0}\|T^n\|\leq M^2(\frac{1}{c}M^2)^3
= \frac{1}{c^3}M^8.$$
To conclude the proof of the theorem, let  $\frac{1}{c}$ tend to $K$.  \end{proof}

\section{On the sets satisfying some metric condition}

In this section we apply the main result of the paper to unitary operators 
whose spectral measures are supported on sets satisfying some 
metric condition. 

\begin{definition}\label{def3.1} \cite{5}, \cite{7}.   Let $E\subset\mathbb T$ be a closed set. 
For $\varepsilon>0$,
 denote by $N_\varepsilon$ the smallest number of closed arcs 
of length $\varepsilon$ whose union contains $E$. We set
$$\alpha(E)=\liminf_{\varepsilon\to 0}
\frac{N_\varepsilon}{\log\frac{1}{\varepsilon}}.$$
\end{definition}

It is proved in  \cite{5}, \cite{7}  that if $\alpha(E)<\infty$, 
then $E$ is an $AA^+$-set; 
however, the norm $K(E)$ of the inverse to the imbedding  \eqref{1.1}  
can be arbitrarily large.
In this section we  show that for $AA^+$-sets $E$ such that
  $\alpha(E)<\infty$
the estimate of the left part of  \eqref{1.2}  does not depend 
on $K(E)$.

We will need the following technical lemma.

\begin{lemma}\label{lem3.2} Let $E\subset\mathbb T$ be a closed set such that 
$\alpha(E)<\infty$, and let $\delta>0$. Then
$$E=\bigcup_{j=1}^\ell\{\xi_j\}\cup\bigcup_kE_k,$$
where $\ell<\infty$, $\xi_j\in E$, $j=1,\ldots, \ell$, and
$\{E_k\}_k$ is no more than countable family of closed mutually disjoint
subsets of $E$ such that $\alpha(E_k)\leq\delta$.
\end{lemma}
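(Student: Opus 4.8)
\emph{Plan.} Put $A=\alpha(E)<\infty$, and for $\xi\in\mathbb T$, $r>0$ write $\bar B(\xi,r)=\{\zeta\in\mathbb T:|\zeta-\xi|\le r\}$ and $B(\xi,r)=\{\zeta\in\mathbb T:|\zeta-\xi|<r\}$. The first step is a localization: since $N_\varepsilon$ does not increase under passing to a subset, $r\mapsto\alpha(E\cap\bar B(\xi,r))$ is non-decreasing, so $\beta(\xi):=\inf_{r>0}\alpha(E\cap\bar B(\xi,r))$ is well defined, and $\beta(\xi)>0$ forces $\xi\in E$. I claim the set $B:=\{\xi\in\mathbb T:\beta(\xi)\ge\delta\}$ is finite. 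Indeed, if $\xi_1,\dots,\xi_m\in B$ are distinct, pick $r>0$ so small that the arcs $\bar B(\xi_i,r)$ are pairwise disjoint; their mutual distance is some $d>0$, so for $\varepsilon<d$ every arc of length $\varepsilon$ meets at most one of the sets $E\cap\bar B(\xi_i,r)$, whence $N_\varepsilon(E)\ge\sum_{i=1}^mN_\varepsilon(E\cap\bar B(\xi_i,r))$; dividing by $\log\frac1\varepsilon$, taking $\liminf_{\varepsilon\to0}$ (a $\liminf$ of a finite sum is at least the sum of the $\liminf$s), and using $\alpha(E\cap\bar B(\xi_i,r))\ge\beta(\xi_i)\ge\delta$, we obtain $A\ge m\delta$, so $m\le A/\delta$. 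Let $\xi_1,\dots,\xi_\ell$ ($\ell\le A/\delta<\infty$) be the points of $B$; if $\ell=0$, apply the third step below directly to $E$ in place of $E\cap W_m$, so assume $\ell\ge1$.

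Note $\alpha(E)<\infty$ forces $E$ to have empty interior (a subarc of positive length gives $N_\varepsilon\ge c/\varepsilon$ for some $c>0$), so $\mathbb T\setminus E$ is dense, i.e.\ $E$ is nowhere dense. Fix $\rho_0>0$ small enough that the arcs $\bar B(\xi_j,\rho_0)$, $j=1,\dots,\ell$, are pairwise disjoint. For fixed $j$, the two points of $\mathbb T$ at distance $r$ from $\xi_j$ depend homeomorphically on $r\in(0,\rho_0)$, so $S_j:=\{r\in(0,\rho_0):E\cap\{\zeta:|\zeta-\xi_j|=r\}\ne\emptyset\}$, being the preimage of the nowhere dense set $E$ under a homeomorphism, is closed and nowhere dense; hence $\bigcup_{j=1}^\ell S_j$ is nowhere dense, and I may choose a strictly decreasing sequence $\rho_0=r_0>r_1>r_2>\cdots\to0$ with $r_m\notin\bigcup_jS_j$ for all $m$, so that every circle $\{\zeta:|\zeta-\xi_j|=r_m\}$ misses $E$. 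Put $W_0=\mathbb T\setminus\bigcup_{j}B(\xi_j,r_0)$ and, for $m\ge1$, $W_m=\bigcup_j\{\zeta:r_m\le|\zeta-\xi_j|\le r_{m-1}\}$. Then $\bigcup_{m\ge0}W_m=\mathbb T\setminus\{\xi_1,\dots,\xi_\ell\}$, each $E\cap W_m$ is compact and contains no $\xi_j$, and, because each circle $\{\zeta:|\zeta-\xi_j|=r_m\}$ misses $E$, the sets $\{E\cap W_m\}_{m\ge0}$ are pairwise disjoint.

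The third step splits each $E\cap W_m$ into finitely many closed, pairwise disjoint pieces of $\alpha$-value $\le\delta$. Fix $m$. Each $\eta\in E\cap W_m$ lies outside $B$, so $\beta(\eta)<\delta$ and there is $s_\eta>0$ with $\alpha(E\cap\bar B(\eta,s_\eta))<\delta$; by compactness, finitely many of the arcs $B(\eta,s_\eta)$, $\eta\in E\cap W_m$, cover $E\cap W_m$, and I take a Lebesgue number $\lambda_m>0$ of this cover of $E\cap W_m$. Using density of $\mathbb T\setminus E$, partition $\mathbb T$ into finitely many closed arcs $J_1^{(m)},\dots,J_{q_m}^{(m)}$, each of length $<\lambda_m$ and with all endpoints outside $E$. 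Then the sets $E\cap W_m\cap J_i^{(m)}$ are closed, they cover $E\cap W_m$, for $i\ne i'$ they are disjoint (distinct $J_i^{(m)}$ meet only at endpoints, which lie off $E$), and each, having diameter $<\lambda_m$, is contained in some arc $B(\eta,s_\eta)$ of the cover, so $\alpha(E\cap W_m\cap J_i^{(m)})\le\alpha(E\cap\bar B(\eta,s_\eta))<\delta$. Discarding empty sets, the family $\{E\cap W_m\cap J_i^{(m)}\}_{m\ge0,\,1\le i\le q_m}$ is no more than countable, consists of closed subsets of $E$, is pairwise disjoint (within a fixed $m$ by the above, across different $m$ because the $E\cap W_m$ are disjoint), each of its members has $\alpha$-value $\le\delta$, and its union equals $\bigcup_{m\ge0}(E\cap W_m)=E\setminus\{\xi_1,\dots,\xi_\ell\}$; this is the required decomposition.

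I expect the bookkeeping in the second and third steps to be the main obstacle: deleting the finitely many ``heavy'' points $\xi_j$ destroys compactness, so one must exhaust $E\setminus\{\xi_1,\dots,\xi_\ell\}$ by the compact annular sets $E\cap W_m$, and the only way to keep pieces coming from adjacent annuli (and from adjacent partition arcs) disjoint while keeping them closed is to place the radii $r_m$ and the partition endpoints off $E$ --- which is possible exactly because $\alpha(E)<\infty$ makes $E$ nowhere dense. The finiteness of $B$ (via the superadditivity of $\alpha$ along separated pieces) and the Lebesgue-number argument are routine.
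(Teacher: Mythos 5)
Your argument is correct in substance but follows a genuinely different route from the paper's. The paper proceeds by iterated halving: it splits the nowhere dense closed set $E$ into finitely many disjoint closed pieces of small diameter, uses superadditivity of $\alpha$ over separated closed pieces to see that at most one piece can have $\alpha>\alpha(E)/2$, iterates on that piece with diameters tending to zero (the possible limit point giving the exceptional singleton), and then repeats the whole procedure finitely many times until $\alpha(E)/2^j\le\delta$. You instead localize the quantity once and for all, setting $\beta(\xi)=\inf_{r>0}\alpha(E\cap\bar B(\xi,r))$, prove directly from the same superadditivity that the set of heavy points $\{\xi:\beta(\xi)\ge\delta\}$ has at most $\alpha(E)/\delta$ elements, and then dispose of the rest in one pass by an annular exhaustion around the heavy points together with a compactness/Lebesgue-number argument, cutting along circles and arc endpoints chosen in $\mathbb T\setminus E$ so that the pieces stay closed and pairwise disjoint. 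Both proofs rest on the same two facts (monotonicity of $\alpha$ and its superadditivity over separated closed subsets, plus the possibility of cutting through the complement of the nowhere dense set $E$); your version avoids the recursion and yields the explicit bound $\ell\le\alpha(E)/\delta$ on the number of exceptional points, at the cost of heavier annulus bookkeeping. One small slip: you take $r_0=\rho_0$, but $S_j\subset(0,\rho_0)$, so nothing guarantees that the outermost circles $\{\zeta:|\zeta-\xi_j|=r_0\}$ miss $E$, and then $E\cap W_0$ and $E\cap W_1$ could share points of those circles, spoiling disjointness. Fix it by choosing $r_0\in(0,\rho_0)\setminus\bigcup_jS_j$ (the closed balls of radius $r_0<\rho_0$ remain pairwise disjoint); everything else goes through unchanged.
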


\begin{proof} First, we prove the lemma for $\delta=\alpha(E)/2$.
If $E'\subset\mathbb T$ is  a closed set which 
does not contain nonempty open arcs and $d>0$, then 
$E'$ can be represented in the form
$E'=\cup_{k=1}^KE'_k$, where $K<\infty$ and 
$\{E'_k\}_{k=1}^K$ is a family of closed mutually disjoint
subsets of $E'$ such that $\operatorname{diam} E'_k\leq d$, $k=1,\ldots,K$
(recall that 
$\operatorname{diam} E=\sup\{|\zeta-\xi| :\ \  \zeta,\ \xi\in E\}$).

Let a sequence $\{d_n\}_n$ be such that $d_n>0$, $n=1,2,\ldots$,
and $d_n\to_n 0$.
We represent $E$ in the form  $E=\cup_{k=1}^{K_1}E_{1k}$,
 where $\{E_{1k}\}_{k=1}^{K_1}$ is a family 
of closed mutually disjoint nonempty subsets of $E$ such that 
$\operatorname{diam} E_{1k}\leq d_1$, $k=1,\ldots,K_1$.
If $\varepsilon>0$ is less than the minimum of distances
 between the sets $E_{1k}$, then
 $N_\varepsilon(E)=\sum_{k=1}^{K_1} N_\varepsilon(E_{1k})$.
Therefore, 
$\alpha(E)\geq\sum_{k=1}^{K_1} \alpha(E_{1k})$.
From this estimate we conclude that
 there is at most one index $k$ such that
$\alpha(E_{1k})>\alpha(E)/2$. 
If  $\alpha(E_{1k})\leq\alpha(E)/2$ for every  index $k$, then the lemma 
is proved for  $\delta=\alpha(E)/2$. 
If there exists an index  $k$,  $1\leq k\leq K_1$, such that
$\alpha(E_{1k})>\alpha(E)/2$, say $k=1$, then  
we apply the above procedure to the number $d_2>0$ and the set $E_{11}$,
and so on. The following two cases are possible.

{\it The first case} : in some step, say $n$,  we have that 
$\alpha(E_{nk})\leq\alpha(E)/2$ for every  index $k$, $k=1,\ldots,K_n$,
thus, the lemma is proved for  $\delta=\alpha(E)/2$. 

{\it The second case} : there exists a sequence of closed sets $\{E_{n1}\}_n$
 such that $\operatorname{diam} E_{n1}\leq d_n$, $E_{n+1, 1}\subset E_{n1}$, 
the set
$E'_n=E_{n1}\setminus E_{n+1, 1}$ is a closed nonempty set, and 
$\alpha(E_{n1})>\alpha(E)/2$ for every  $n$. Since $d_n\to_n 0$, 
the intersection
$ \cap_n  E_{n1}$ is a singleton. Finally, 
$E_{n+1, 1}\cup E'_n=E_{n1}\subset E$, 
and, since $E_{n+1, 1}$ and $ E'_n$ are closed and disjoint, 
$$\alpha(E)\geq\alpha(E_{n1})\geq\alpha(E_{n+1, 1})+\alpha( E'_n)>
\alpha(E)/2+\alpha( E'_n),$$
 and we conclude that $\alpha( E'_n)\leq\alpha(E)/2$.

Thus, the lemma is proved for $\delta=\alpha(E)/2$, that is, 
the representation of $E$ 
in the form
$$E=\{\xi\}\cup\bigcup_kE_k,$$
where $\{E_k\}_k$ is no more than countable family of closed mutually disjoint
subsets of $E$ such that $\alpha(E_k)\leq\alpha(E)/2$ and 
$\{\xi\}= \cap_n  E_{n1}$,
if the second case takes place, is obtained. Now we apply 
the already proved part of
the lemma to every set $E_k$ and obtain the needed representation of $E$ 
in which (new) sets $E_k$ satisfy the condition  $\alpha(E_k)\leq\alpha(E)/4$,
and so on. On some step, say $j$, we get that 
$\alpha(E)/2^j\leq\delta$.  \end{proof} 

\begin{remark}\label{rem3.3} In the proof of Lemma \ref{lem3.2}, the second case actually can take place. 
Let $0<a\leq 1/2$ and 
$E_j=\{e^{i\frac{a}{1-a}}\}\cup\bigcup_{n=j}^\infty\{e^{i\sum_{k=1}^n a^k}\}$.
It is easy to see that $\alpha(E_j)=\frac{1}{\log\frac{1}{a}}$ for every 
$j=1,2,\ldots$
and $\cap_{j=1}^\infty E_j=\{e^{i\frac{a}{1-a}}\}$.
\end{remark}

The following lemma is actually from \cite{7} and {\cite[\S VII.8]{8}}. 
For convenience, we sketch the proof.

\begin{lemma}\label{lem3.4} Suppose that  $K\in\mathbb N$, $K\geq 3$, and 
$E\subset\mathbb T$ is a closed set 
such that $\alpha(E)<1/\log K$. 
Then $\liminf_n\|\zeta^n-1\|_{C(E)}\leq 2\sin\frac{\pi}{K-1}$.
\end{lemma}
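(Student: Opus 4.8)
The quantity $\liminf_n\|\zeta^n-1\|_{C(E)}$ is governed by how well the powers $\zeta^n$ can be made simultaneously close to $1$ on $E$. I would exploit the covering hypothesis: since $\alpha(E)<1/\log K$, for infinitely many $\varepsilon\to 0$ the set $E$ is covered by fewer than $\frac{1}{\log K}\log\frac1\varepsilon$ closed arcs of length $\varepsilon$, i.e. $N_\varepsilon<\log_K\frac1\varepsilon$. Write $N=N_\varepsilon$ and let $I_1,\dots,I_N$ be the covering arcs, with centers $\zeta_1,\dots,\zeta_N\in\mathbb T$. The idea (this is the classical Kahane-type argument behind \cite{7} and \cite[\S VII.8]{8}) is to choose a single integer $n$ so that all $N$ points $\zeta_1^n,\dots,\zeta_N^n$ land close to $1$ on $\mathbb T$; then on each arc $I_k$ the modulus-$1$ function $\zeta^n$ stays within $\varepsilon\cdot n$ (roughly) of $\zeta_k^n$, and since we get to pick $\varepsilon$ as small as we like relative to $n$, the dominant contribution is $\max_k|\zeta_k^n-1|$.

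First I would make the ``all powers near $1$'' step precise via a pigeonhole/Dirichlet argument. Consider the arguments $\theta_1,\dots,\theta_N$ of $\zeta_1,\dots,\zeta_N$, normalized to $[0,1)$. For a parameter $q\in\mathbb N$, partition the torus $[0,1)^N$ into $q^N$ boxes of side $1/q$; among the $q^N+1$ points $\big(\{n\theta_1\},\dots,\{n\theta_N\}\big)$ for $n=0,1,\dots,q^N$, two lie in the same box, and their difference gives an integer $n$ with $1\le n\le q^N$ and $\{n\theta_k\}$ within $1/q$ of an integer for every $k$. This yields $|\zeta_k^n-1|\le 2\sin\frac{\pi}{q}$ for all $k$. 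The catch is that this $n$ is only bounded by $q^N$, so I must choose $\varepsilon$ afterward, small enough that $n\varepsilon$ is negligible: on arc $I_k$ one has $|\zeta^n-\zeta_k^n|\le n\cdot(\text{length of }I_k)\le n\varepsilon\le q^N\varepsilon$, and since $N_\varepsilon<\log_K\frac1\varepsilon$ we may shrink $\varepsilon$ through the admissible sequence so that $q^N\varepsilon\to 0$ for fixed $q$. Taking $q=K-1$ gives, along this sequence, $\|\zeta^n-1\|_{C(E)}\le 2\sin\frac{\pi}{K-1}+o(1)$, hence $\liminf_n\|\zeta^n-1\|_{C(E)}\le 2\sin\frac{\pi}{K-1}$.

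The one delicate point requiring care is the interplay between the three parameters $q$, $\varepsilon$, and the resulting $n\le q^N=q^{N_\varepsilon}$: I need $N_\varepsilon$ to grow slowly enough (which is exactly $\alpha(E)<1/\log K$, giving $N_\varepsilon<\log_K\frac1\varepsilon$, i.e. $q^{N_\varepsilon}<\varepsilon^{-\log q/\log K}<\varepsilon^{-1}$ when $q\le K-1<K$) so that $q^{N_\varepsilon}\varepsilon\to 0$ along a suitable subsequence $\varepsilon\to 0$ realizing the $\liminf$ in $\alpha(E)$. Concretely: fix $q=K-1$; since $\log q/\log K<1$, for all small $\varepsilon$ in the admissible sequence $q^{N_\varepsilon}\varepsilon\le \varepsilon^{1-\log q/\log K}\to 0$. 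So the main obstacle is purely bookkeeping — ensuring the order of quantifiers (fix $q$, then send $\varepsilon\to0$ along the good sequence, extracting $n=n(\varepsilon)\to\infty$) is legitimate and that $n\to\infty$ so that we genuinely bound a $\liminf$ over $n$ rather than a single term. I would also note the standard estimate $|\zeta^n-\xi|\le 2\sin\frac{\pi}{q}$ whenever $\{n\arg\zeta/2\pi\}$ is within $1/q$ of an integer, which is where the constant $2\sin\frac{\pi}{K-1}$ comes from, and remark that if $E$ is finite the claim is trivial (take $\xi=1$, $n$ making all finitely many $\zeta^n$ near $1$), so we may assume $E$ infinite, guaranteeing $N_\varepsilon\to\infty$ and the argument is nonvacuous.
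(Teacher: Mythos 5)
Your strategy is the same as the paper's: cover $E$ by $N_\varepsilon$ arcs with $N_\varepsilon\le\frac{1}{\log K}\log\frac1\varepsilon$ along a sequence $\varepsilon\to0$ (this is where $\alpha(E)<1/\log K$ enters), use simultaneous Diophantine approximation on the arc centers to bring all their $n$-th powers within $2\sin\frac{\pi}{K-1}$ of $1$, and absorb the drift across each arc via $n\varepsilon\le(K-1)^{N_\varepsilon}\varepsilon\le\varepsilon^{1-\log(K-1)/\log K}\to0$; the choice $q=K-1<K$ and this bookkeeping are correct and match the paper.

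The genuine gap is the point you flag and then wave off as bookkeeping: nothing in your argument makes the produced exponents tend to infinity. The plain pigeonhole over $n=0,1,\dots,q^{N}$ only yields \emph{some} $n$ with $1\le n\le q^{N}$; for all your argument shows, the same small exponent $n_0$ could be produced for every admissible $\varepsilon$, and then you have only proved $\|\zeta^{n_0}-1\|_{C(E)}\le2\sin\frac{\pi}{K-1}$ for one fixed $n_0$, which says nothing about $\liminf_n$, since the liminf concerns the tail of the sequence. The paper closes exactly this hole by using the Dirichlet theorem with an extra parameter: it picks $Q_n\to\infty$ with $Q_n\bigl(\frac{K-1}{K}\bigr)^{N_n}\to0$ and gets $q_n$ with $Q_n\le q_n\le Q_n(K-1)^{N_n}$ and $|q_nt_{nj}-p_{nj}|\le\frac{1}{K-1}$, so that $q_n\to\infty$ automatically while the drift term $q_n\varepsilon_n/2\le\frac{Q_n}{2}\bigl(\frac{K-1}{K}\bigr)^{N_n}$ still vanishes. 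In your language, pigeonhole the points indexed by the multiples $0,Q,2Q,\dots,(K-1)^{N}Q$ instead of by $0,1,\dots,(K-1)^{N}$, so the resulting exponent is at least $Q$, and let $Q=Q_\varepsilon\to\infty$ slowly enough that $Q_\varepsilon(K-1)^{N_\varepsilon}\varepsilon\to0$ (possible because $(K-1)^{N_\varepsilon}\varepsilon\le\bigl(\frac{K-1}{K}\bigr)^{N_\varepsilon}$ and $N_\varepsilon\to\infty$ for infinite $E$). With that modification your proof coincides with the paper's. Your side remark on finite $E$ has the same defect --- a single good $n$ does not bound a liminf --- though there the claim is classical: if all points of $E$ are roots of unity, multiples of a common order work, and otherwise Dirichlet already produces infinitely many good exponents.
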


\begin{proof}  There exists a sequence $\{\varepsilon_n\}_n$ 
such that $\varepsilon_n>0$,
$\varepsilon_n\to_n 0$ and 
\begin{equation}\label{3.1} N_n=N_{\varepsilon_n}\leq\frac{1}{\log K}\log\frac{1}{\varepsilon_n}. \end{equation}
We take a sequence $\{Q_n\}_n\subset \mathbb N$ such that $Q_n\to_n\infty$ and
\begin{equation}\label{3.2} Q_n\Bigl(\frac{K-1}{K}\Bigr)^{N_n}\to_n 0. \end{equation}
Let $n$ be fixed. Denote by $t_{n1}, \ldots, t_{nN_n}$  real points such that 
 $e^{2\pi i t_{nj}}$, $j=1, \ldots, N_n$,
 are the centerpoints of closed arcs of length $\varepsilon_n$     
 whose union contains $E$. We apply the Dirichlet theorem 
(see {\cite[appendix \S V.2]{8}}) to real points $t_{n1}, \ldots, t_{nN_n}$
and natural numbers $K-1$ and $Q_n$. We obtain $q_n\in\mathbb N$ and
 $p_{n1}, \ldots, p_{nN_n}\in\mathbb Z$ such that 
\begin{equation}\label{3.3} Q_n\leq q_n\leq Q_n (K-1)^{N_n}\ \ \text{and}
\ \ \ |q_nt_{nj}-p_{nj} |\leq \frac{1}{K-1},  \ \ j=1, \ldots, N_n. \end{equation}
Let $\zeta\in E$. There exist a real point $t$ and an index $j$,
 $1\leq  j\leq N_n$, such that
$\zeta=e^{2\pi i t}$ and $|t-t_{nj}|\leq\varepsilon_n/2$.
From this  estimate, \eqref{3.1} and \eqref{3.3}, we have  
\begin{equation}\label{3.4} | q_nt-p_{nj}|\leq 
\frac{Q_n}{2}\Bigl(\frac{K-1}{K}\Bigr)^{N_n}+\frac{1}{K-1}.\end{equation}
Since $\zeta=e^{2\pi i t}$ is an arbitrary point of $E$, from \eqref{3.2} and \eqref{3.4} 
we conclude that $\lim_n\|\zeta^{q_n}-1\|_{C(E)}\leq 2\sin\frac{\pi}{K-1}$. \end{proof}

\begin{theorem}\label{th3.5} Suppose that $\{E_k\}_k$ is no more than countable 
family of 
closed subsets of $\mathbb T$ such that $\alpha(E_k)<\infty$ 
(where the quantity 
$\alpha$ is defined in Definition \ref{def3.1}). 
 Suppose that $U$ is a unitary operator, $\mu$ is its scalar  spectral measure, 
 and $\mu(\mathbb T)=\mu(\cup_k E_k)$. 
Let $T$ be a power bounded operator such that $T\sim U$.   
 Then $T\approx U$ and 
 $\sup_{n\leq 0}\|T^nx\|\leq (\sup_{n\geq 0}\|T^nx\|)^8$.
\end{theorem}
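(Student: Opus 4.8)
The plan is to follow the proof of Theorem~\ref{th2.12}, replacing the step that invokes the hypothesis on positive measures — which there only yields functions $\varphi_j$ with $|\varphi_j|\ge c$ for some $c<1/K$ — by a direct construction, based on Lemmas~\ref{lem3.2} and~\ref{lem3.4}, of such functions with $c$ arbitrarily close to $1$; this is exactly what turns the bound $K^3M^8$ of Theorem~\ref{th2.12} into $M^8$. Set $M=\sup_{n\ge 0}\|T^n\|$ and $E=\bigcup_kE_k$; since $\mu(\mathbb T)=\mu(E)$, the measure $\mu$ is concentrated on $E$. Fix an integer $K'\ge 8$ and put $c=1-2\sin\frac{\pi}{K'-1}\in(0,1)$. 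Choosing $\delta$ with $0<\delta<1/\log K'$ and applying Lemma~\ref{lem3.2} to each $E_k$ with this $\delta$ (a one-point set has $\alpha$ equal to $0$), I would exhibit $E$ as a countable union $\bigcup_iF_i$ of closed sets with $\alpha(F_i)\le\delta$; after disjointification, the sets $\widetilde F_i=F_i\setminus\bigcup_{i'<i}F_{i'}$ are pairwise disjoint Borel sets with $\bigcup_i\widetilde F_i=E$ and $\alpha(\operatorname{clos}\widetilde F_i)\le\alpha(F_i)\le\delta<1/\log K'$, by monotonicity of $N_\varepsilon$ under inclusion.

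For each $i$, Lemma~\ref{lem3.4} applied to $\operatorname{clos}\widetilde F_i$ provides positive integers $n_{ik}\to_k\infty$ with $\limsup_k\|\zeta^{n_{ik}}-1\|_{C(\operatorname{clos}\widetilde F_i)}\le 2\sin\frac{\pi}{K'-1}$. Passing to a subsequence, I may assume $\zeta^{n_{ik}}\to_k\varphi_i$ in the weak-star topology on $L^\infty(\widetilde F_i,\mu)$; since for every $\epsilon>0$ the set $\{h:\|h-1\|_\infty\le 2\sin\frac{\pi}{K'-1}+\epsilon\}$ is weak-star closed, we get $|\varphi_i-1|\le 2\sin\frac{\pi}{K'-1}$, hence $|\varphi_i|\ge c$ a.e.\ on $\widetilde F_i$. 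Let $\mathcal K_i=\mathbb E(\widetilde F_i)\mathcal K$ be the spectral subspace of $U$ corresponding to $\widetilde F_i$ and $U_i=U|_{\mathcal K_i}$. Because the $\widetilde F_i$ are disjoint and cover $E$ modulo a $\mu$-null set, $\mathcal K=\oplus_i\mathcal K_i$ and $U=\oplus_iU_i$; moreover $\mathcal K_i\in\operatorname{Hyplat}U$ since the spectral projections of $U$ commute with every operator commuting with $U$. Finally $U_i^{n_{ik}}\to_k\varphi_i(U_i)$ in the weak operator topology (see Introduction), $\varphi_i(U_i)$ is invertible, and $\|\varphi_i(U_i)^{-1}\|=\|1/\varphi_i\|_{L^\infty(\widetilde F_i,\mu)}\le 1/c$.

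Next I would apply Theorems~\ref{th2.5} and~\ref{th2.11}. By Theorem~\ref{th2.5}, applied to $U_i$ and $A=\varphi_i(U_i)$, each $U_i$ satisfies hypothesis (i) of Theorem~\ref{th2.11} with $C=\frac1cM^2$: whenever $R\sim U_i$ and $\sup_{n\ge 0}\|R^n\|\le M$, one has $R\approx U_i$ and $\sup_{n\le 0}\|R^n\|\le\|\varphi_i(U_i)^{-1}\|\,(\sup_{n\ge 0}\|R^n\|)^2\le\frac1cM^2$; hypothesis (ii) was checked above. Since $T\sim U=\oplus_iU_i$ with $\sup_{n\ge 0}\|T^n\|\le M$, Theorem~\ref{th2.11} gives $T\approx U$ and $\sup_{n\le 0}\|T^n\|\le M^2C^3=\frac1{c^3}M^8$. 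Letting $K'\to\infty$ drives $c=1-2\sin\frac{\pi}{K'-1}$ to $1$, and we conclude $\sup_{n\le 0}\|T^n\|\le M^8=(\sup_{n\ge 0}\|T^n\|)^8$.

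The point where care is needed — and the reason one cannot simply quote Theorem~\ref{th2.12} — is that Lemma~\ref{lem3.4} only supplies, for each individual piece $\operatorname{clos}\widetilde F_i$, its own Dirichlet sequence $\{n_{ik}\}_k$, with no evident way to merge these into a single sequence working on a finite union of pieces (the quantity $\alpha$ is controlled only along piece-dependent sequences of scales $\varepsilon$, and $\alpha$ of a finite union need not be small). This is what forces the argument through the biorthogonality machinery of Theorem~\ref{th2.11} rather than through a direct verification of the positive-measure condition of Theorem~\ref{th2.12}; the genuinely new ingredient, compared with the proof of Theorem~\ref{th2.12}, is merely that Lemmas~\ref{lem3.2} and~\ref{lem3.4} give closeness to the \emph{specific} value $1$ with error $2\sin\frac{\pi}{K'-1}\to 0$, allowing the constant $c$ to be pushed to $1$.
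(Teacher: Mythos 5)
Your proposal is correct and follows essentially the same route as the paper: Lemma \ref{lem3.2} to split $E$ into countably many closed pieces with $\alpha<1/\log K$, Lemma \ref{lem3.4} plus Theorem \ref{th2.5} to verify hypothesis (i) of Theorem \ref{th2.11} with $C=\frac{1}{1-2\sin\frac{\pi}{K-1}}M^2$ on each piece, then Theorem \ref{th2.11} and the limit $K\to\infty$. The only cosmetic differences are that the paper first splits off the atomic part of $\mu$ (handled via Example \ref{exa2.6}, so the singletons produced by Lemma \ref{lem3.2} can be ignored) and treats the disjointified pieces via Lemma \ref{lem2.2}, whereas you keep the atoms inside the pieces and use monotonicity of $\alpha$ under passing to (closures of) subsets; both variants work.
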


\begin{proof}
 We set $M=\sup_{n\geq 0}\|T^nx\|$ and $E=\cup_k E_k$.
We denote by $\mu_a$ and $\mu_c$ the pure atomic and continuous parts of $\mu$, 
respectively, and we denote by $U_a$ and $U_c$ the correspondent parts of $U$.
It is easy to see that $U_a$ satisfies  condition (i) of Theorem \ref{th2.11}
with $C=M^2$ (see Example \ref{exa2.6}).

We fix a natural number $K\geq 3$. Applying Lemma \ref{lem3.2} 
with $\delta<1/\log K$ to every 
set $E_k$,  we obtain the representation of $E$
in the form
\begin{equation}\label{3.5} E=\bigcup_j\{\xi_j\}\cup\bigcup_\ell E_{K\ell}, \end{equation}
where $\alpha( E_{K\ell})<1/\log K$ and the sets of indeces $j$ and $\ell$ are 
no more than countable. Let $U_{K\ell}$ be the restriction 
of $U_c$ on its spectral subspace corresponding to the set $E_{K\ell}$.
Applying Theorem \ref{th2.5} and Lemma \ref{lem3.4}, we will show that  
$U_{K\ell}$ satisfies condition (i) of Theorem \ref{th2.11}
with
\begin{equation}\label{3.6} C=C_K=\frac{1}{1-2\sin\frac{\pi}{K-1}}M^2. \end{equation}
By  Lemma \ref{lem3.4},  there exists a sequence $\{q_n\}_n$
(which depends on $K$ and $\ell$) such that
$\lim_n\|\zeta^{q_n}-1\|_{C(E_{K\ell})}\leq 2\sin\frac{\pi}{K-1}$.
There exist a subsequence  $\{p_n\}_n$ of $\{q_n\}_n$  and 
 a function $\varphi\in L^\infty(E_{K\ell}, \mu_c)$
such that $\zeta^{p_n}\to_n\varphi$ 
in the weak-star topology on  $L^\infty(E_{K\ell}, \mu_c)$. 
Thus,
$U_{K\ell}^{p_n}\to_n \varphi(U_{K\ell})$ in the weak operator topology.
Since
$\lim_n\|\zeta^{p_n}-1\|_\infty\leq 2\sin\frac{\pi}{K-1}$, we have that 
$|\varphi|\geq 1-2\sin\frac{\pi}{K-1}$ a.e. on $E_{K\ell}$ 
with respect to $\mu_c$.  
By Theorem \ref{th2.5}, 
$U_{K\ell}$ satisfies condition (i) of Theorem \ref{th2.11}
with $C=C_K$.

Since we do not suppose that the sets $E_k$ are mutually disjoint, 
we need to change the sets $E_{K\ell}$ in representation \eqref{3.5}.
Namely, we set $E'_{K1}=E_{K1}$ and 
$E' _{K\ell}=E _{K\ell}\setminus E _{K,\ell-1}$ for $\ell>1$.
Now the sets $E' _{K\ell}$ are mutually disjoint.
Let $U'_{K\ell}$ be the restriction 
of $U_c$ on its spectral subspace corresponding to the set $E'_{K\ell}$.
Applying  Lemma \ref{lem2.2}  to  $U_{K\ell}$ we conclude that  
$U'_{K\ell}$ satisfies the condition (i) of Theorem \ref{th2.11}
with the same constant $C=C_K$.
Thus,  $U$ has  the representation
$$U=U_a\oplus\bigoplus_\ell U'_{K\ell},$$ 
and we conclude that $U$
 satisfies the conditions of Theorem \ref{th2.11}. Therefore, $T\approx U$ and 
 $$\sup_{n\leq 0}\|T^nx\|\leq M^2C_K^3, $$
 where $C_K$ is defined in \eqref{3.6} and $K$ is an arbitrary natural number, $K\geq 3$.

To conclude the proof of the theorem, we note that
$ M^2C_K^3\to M^8$ when $K\to \infty$. \end{proof}

\begin{corollary}\label{cor3.6} In Theorem \ref{th3.5} the condition $T\sim U$ 
can be replaced  by $T\prec U$.\end{corollary}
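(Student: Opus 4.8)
The plan is to derive the corollary from Lemma~\ref{lem2.4} together with Theorem~\ref{th3.5} itself. Lemma~\ref{lem2.4} reduces the passage from $T\prec U$ to $T\approx U$ to checking two properties of $U$: property~$2)$ of that lemma, that a power bounded $T$ with $T\sim U$ satisfies $T\approx U$, is exactly the conclusion of Theorem~\ref{th3.5}; so all that is needed is property~$1)$, namely that a vector $x$ with $U^nx\to_n0$ in the weak topology must be $0$. Granting this, Lemma~\ref{lem2.4} gives $T\approx U$; in particular $T\sim U$, and then Theorem~\ref{th3.5} applied to this $T$ yields the estimate $\sup_{n\le0}\|T^n\|\le(\sup_{n\ge0}\|T^n\|)^8$.

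To check property~$1)$ I would reuse the decomposition constructed in the proof of Theorem~\ref{th3.5}. Write $U=U_a\oplus U_c$ for the pure atomic and continuous parts of $U$, acting on $\mathcal K_a$ and $\mathcal K_c$, and decompose $x=x_a\oplus x_c$ accordingly; since both summands are reducing, $U_a^nx_a\to_n0$ and $U_c^nx_c\to_n0$ weakly. For the atomic part $U_a\cong\oplus_j\lambda_jI_{\mathcal H_j}$, the diagonal process of Example~\ref{exa2.6} yields a subsequence $\{n_k\}_k$ and points $\{\xi_j\}_j\subset\mathbb T$ with $U_a^{n_k}\to_kA_a:=\oplus_j\xi_jI_{\mathcal H_j}$ in the weak operator topology; $A_a$ is unitary, so $\|A_ax_a\|=\|x_a\|$, whereas $U_a^{n_k}x_a\to_k0$ weakly forces $A_ax_a=0$, hence $x_a=0$.

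For the continuous part, fix a natural number $K\ge8$ (so that $2\sin\frac{\pi}{K-1}<1$) and take the family $\{E_{K\ell}\}_\ell$ of closed sets with $\alpha(E_{K\ell})<1/\log K$ produced from Lemma~\ref{lem3.2}, exactly as in the proof of Theorem~\ref{th3.5}. Since $\mu_c$ is continuous and carried by $\bigcup_j\{\xi_j\}\cup\bigcup_\ell E_{K\ell}$, it is carried by $\bigcup_\ell E_{K\ell}$, so the spectral subspaces $\mathcal K_{K\ell}$ of $U_c$ corresponding to the $E_{K\ell}$ span $\mathcal K_c$. By Lemma~\ref{lem3.4}, for each $\ell$ there is a sequence $\{q_n\}_n$ with $\limsup_n\|\zeta^{q_n}-1\|_{C(E_{K\ell})}\le2\sin\frac{\pi}{K-1}$; passing to a subsequence along which $\zeta^{q_n}\to\varphi_\ell$ in the weak-star topology of $L^\infty(E_{K\ell},\mu_c)$ gives $|\varphi_\ell|\ge1-2\sin\frac{\pi}{K-1}>0$ a.e.\ with respect to $\mu_c$, so the operator $\varphi_\ell(U_{K\ell})$ is bounded below, where $U_{K\ell}=U_c|_{\mathcal K_{K\ell}}$. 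Let $P_{K\ell}$ be the orthogonal projection onto the reducing subspace $\mathcal K_{K\ell}$. Applying $P_{K\ell}$ to $U_c^{q_n}x_c\to_n0$ and using $P_{K\ell}U_c^{q_n}=U_{K\ell}^{q_n}P_{K\ell}$, we get $U_{K\ell}^{q_n}(P_{K\ell}x_c)\to_n0$ weakly; but this sequence also converges weakly to $\varphi_\ell(U_{K\ell})(P_{K\ell}x_c)$, so $\varphi_\ell(U_{K\ell})P_{K\ell}x_c=0$, hence $P_{K\ell}x_c=0$. As this holds for every $\ell$ and the $\mathcal K_{K\ell}$ span $\mathcal K_c$, we get $x_c=0$, so $x=0$; thus $U$ satisfies the hypotheses of Lemma~\ref{lem2.4}.

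The main obstacle is precisely this verification of property~$1)$: one must ensure that every weak-star cluster point $\varphi_\ell$ of $\{\zeta^{q_n}\}$ on the pieces $E_{K\ell}$ stays bounded away from $0$, which is exactly where the metric hypothesis $\alpha(E_k)<\infty$ enters, via Lemmas~\ref{lem3.2} and~\ref{lem3.4}; the atomic part and the bookkeeping with reducing subspaces are routine.
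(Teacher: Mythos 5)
Your proposal is correct, and its overall skeleton coincides with the paper's: the paper also proves the corollary by checking that $U$ satisfies condition $1)$ of Lemma~\ref{lem2.4}, with condition $2)$ supplied by Theorem~\ref{th3.5}. The difference lies entirely in how condition $1)$ is verified. The paper's route is spectral-theoretic plus harmonic-analytic: it considers $\mathcal Z(U)=\{x: U^nx\to 0 \text{ weakly}\}$, notes that this is a hyperinvariant, hence spectral, subspace of $U$ corresponding to some Borel set $\tau$, and argues that $\mu(\tau)>0$ would produce a nonzero measure $\mu|_{\tau\cap E_k}$ supported on $E_k$ whose Fourier coefficients tend to zero; since $\alpha(E_k)<\infty$ makes $E_k$ an $AA^+$-set, inequality \eqref{2.14} forbids this, a contradiction. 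Your verification instead works vector by vector and reuses the machinery already deployed in the proof of Theorem~\ref{th3.5}: split $x$ into atomic and continuous parts, kill the atomic part with the diagonal sequence of Example~\ref{exa2.6}, and on each piece $E_{K\ell}$ produced by Lemmas~\ref{lem3.2} and~\ref{lem3.4} extract a weak-star limit $\varphi_\ell$ with $|\varphi_\ell|\geq 1-2\sin\frac{\pi}{K-1}>0$ (your restriction $K\geq 8$ is exactly what makes this positive), so that $\varphi_\ell(U_{K\ell})$ is bounded below and the weak limit $\varphi_\ell(U_{K\ell})P_{K\ell}x_c=0$ forces $P_{K\ell}x_c=0$; since the $\mathcal K_{K\ell}$ span $\mathcal K_c$, $x=0$. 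What the paper's argument buys is brevity: one application of the cited $AA^+$ results of Kahane--Katznelson disposes of condition $1)$ in a few lines. What yours buys is self-containedness: it avoids the characterization \eqref{2.14} and the implication ``$\alpha(E)<\infty\Rightarrow E$ is an $AA^+$-set'' altogether, at the cost of some bookkeeping with reducing subspaces and subsequences, all of which you handle correctly (weak limits along subsequences of a weakly null sequence are null, and boundedness below of $\varphi_\ell(U_{K\ell})$ follows from the essential lower bound on $|\varphi_\ell|$ with respect to the spectral measure).
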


\begin{proof}  We will show that $U$ satisfies  condition 1) of Lemma \ref{lem2.4}.
Then the corollary will be proved.  
Denote by $\mathcal K$ the space on which $U$ acts and by $\mu$ the scalar spectral measure of $U$.
Following \cite{12}
we set $$\mathcal Z(U)=\{x\in\mathcal K: U^nx\to_n 0\ \text{  in the weak topology}\}.$$
It is easy to see that $\mathcal Z(U)$ is a hyperinvariant subspace for $U$, 
therefore, there exists a set $\tau\subset\mathbb T$ such that
$\mathcal Z(U)$
 is the spectral subspace of $U$ corresponding to $\tau$.
Let us suppose that $\mu(\tau)>0$.
Then there exists an index $k$ such that $\mu(\tau\cap E_k)>0$.
The  spectral subspace of $U$ corresponding to
 $\tau\cap E_k$ is contained in $\mathcal Z(U)$, therefore, the
Fourier coefficients of $\mu|_{\tau\cap E_k}$ tend to zero. But, 
by assumption, the set $E_k$ is an $AA^+$-set, and if the Fourier 
coefficients of a measure supported on  $E_k$ tend to zero, 
then this measure must be zero itself \cite{6}, \cite{7} (see \eqref{2.14}), a contradiction. 
\end{proof}

\end{document}